\newtheorem{Theorem}{Theorem}[section]
\newtheorem{Lemma}[Theorem]{Lemma}
\newtheorem{Proposition}[Theorem]{Proposition}
\theoremstyle{definition}
\newtheorem{Definition}[Theorem]{Definition}
\newtheorem{Remark}[Theorem]{Remark}
\newcommand{\thref}[1]{Theorem \ref{#1}}
\newcommand{\leref}[1]{Lemma \ref{#1}}
\newcommand{\prref}[1]{Proposition \ref{#1}}
\newcommand{\seref}[1]{Section \ref{#1}}
\numberwithin{equation}{section}
\newcommand{\cAz}{{\mathcal A}_{z}}
\newcommand{\cAn}{{\mathcal A}_{n}}
\newcommand{\cPz}{{\mathcal P}_{z^{\pm 1}}}
\newcommand{\cPx}{{\mathcal P}_x}
\newcommand{\C}{\mathbb C}
\newcommand{\Z}{\mathbb Z}
\newcommand{\R}{\mathbb R}
\newcommand{\N}{\mathbb N}
\newcommand{\T}{\mathbb T}
\newcommand{\Id}{\mathrm{Id}}
\newcommand{\ff}{\mathfrak{f}}
\newcommand{\al}{{\alpha}}
\newcommand{\ep}{{\varepsilon}}
\newcommand{\alt}{\tilde{\alpha}}
\newcommand{\zt}{\tilde{z}}
\newcommand{\nt}{\tilde{n}}
\newcommand{\Nt}{\tilde{N}}
\newcommand{\xt}{\tilde{x}}
\newcommand{\cDza}{{\mathcal D}_z^{\alpha}}
\newcommand{\cDna}{{\mathcal D}_n^{\alpha}}
\newcommand{\Ph}{\hat{P}}
\newcommand{\bi}{{\mathfrak{b}}}
\newcommand{\E}[1]{E_{q,\,#1}}
\newcommand{\DD}[1]{D_{q,\,#1}}
\newcommand{\fs}[1]{{\triangle}_{q,\,#1}}
\newcommand{\bs}[1]{{\nabla}_{q,\,#1}}
\newcommand{\cL}{{\mathcal L}}
\newcommand{\cDz}{{\mathcal D}_z}
\newcommand{\fLz}{{\mathfrak L}^{z}}
\newcommand{\fLn}{{\mathfrak L}^{n}}
\newcommand{\fpt}[7]{{}_4\phi_3\left[\begin{matrix} #1 , #2, #3, #4 \\
#5, #6, #7 \end{matrix}\,; q,q\right]}
\begin{document}

\title[Bispectral operators for multivariable 
Askey-Wilson polynomials]
{Bispectral commuting difference operators for multivariable 
Askey-Wilson polynomials}

\author[P.~Iliev]{Plamen~Iliev}
\address{School of Mathematics, Georgia Institute of Technology,
Atlanta, GA 30332--0160, USA}
\email{iliev@math.gatech.edu}

\date{August 14, 2009}

\begin{abstract}
We construct a commutative algebra $\cAz$, generated by $d$ algebraically 
independent $q$-difference operators acting on variables $z_1,z_2,\dots,z_d$, 
which is diagonalized by the multivariable Askey-Wilson polynomials 
$P_n(z)$ considered by Gasper and Rahman \cite{GR2}. Iterating Sears' 
${}_4\phi_3$ transformation formula, we show that the polynomials 
$P_n(z)$ possess a certain duality between $z$ and $n$. Analytic continuation 
allows us to obtain another commutative algebra $\cAn$, generated by $d$ 
algebraically independent difference operators acting on the discrete 
variables $n_1,n_2,\dots,n_d$, which is also diagonalized by $P_n(z)$.
This leads to a multivariable $q$-Askey-scheme of bispectral orthogonal 
polynomials which parallels the theory of symmetric functions.
\end{abstract}

\maketitle

\tableofcontents

\section{Introduction}\label{se1}

In \cite{AW} Askey and Wilson introduced a remarkable family 
$\{p_n(x):n\in\N_0\}$ of orthogonal polynomials on the real line 
depending on four parameters that satisfy a 
second-order $q$-difference equation
\begin{equation}\label{1.1}
A(z)\E{z}(p_n(x))+B(z)p_n(x)+C(z)\E{z}^{-1}(p_n(x))=\lambda_np_n(x),
\end{equation}
where $x=\frac{1}{2}(z+\frac{1}{z})$, $\E{z}$ is the $q$-shift operator acing 
on functions of $z$ by $\E{z}^{\pm 1}(g(z))=g(zq^{\pm1})$, $A(z)$, $B(z)$, 
$C(z)$ are independent of $n$, and $\lambda_n$ is independent of $z$. 
The orthogonality implies that the polynomials $p_n(x)$ satisfy 
also a three-term recursion relation
\begin{equation}\label{1.2}
a_np_{n+1}(x)+b_np_n(x)+c_np_{n-1}(x)=xp_n(x),
\end{equation}
where $a_n$, $b_n$ and $c_n$ are independent of $x$ and $c_0=0$.

If we set $L^z=A(z)\E{z}+B(z)\Id+C(z)\E{z}^{-1}$ then the operator $L^z$
acts naturally on the vector space consisting of all polynomials in the 
variable $x$ 
and equation \eqref{1.1} means that the polynomials $p_n(x)$ are eigenvectors 
of this operator with eigenvalues $\lambda_n$. Similarly, let $E_n$ denote the 
shift operator acting on an arbitrary function $f_n=f(n)$ 
of a discrete variable $n$ by $E_n(f_n)=f_{n+1}$ and let 
$L^n=a_nE_n+b_n\Id+c_nE_n^{-1}$. Then 
the operator $L^n$ acts on the vector space of complex-valued functions 
of a discrete variable $n\in\N_0$ and equation \eqref{1.2} means that $p_n(x)$ 
are eigenvectors of the operator $L^n$ with eigenvalue $x$.
Following \cite{DG} we can say that the Askey-Wilson polynomials solve a 
{\em $q$-difference-difference bispectral problem}. Moreover, this 
bispectral property can be used to characterize the Askey-Wilson polynomials 
\cite{GH1,GH2}.

As the title of the paper \cite{AW} states, the Askey-Wilson polynomials 
generalize those of Jacobi. In fact most of the polynomials used in analysis 
and mathematical physics can be obtained as special or limiting cases 
of these polynomials, see for instance \cite{KS}. For that reason, the 
Askey-Wilson polynomials are at the very top of the $q$-Askey scheme. For the 
terminating branch of the Askey scheme, the bispectral property has a 
natural interpretation within the framework of Leonard pairs \cite{T}.

An interesting question arising from the above discussion is whether there 
is a multivariable $q$-Askey scheme and if the polynomials in this scheme
possess the bispectral property. We refer the reader to the book \cite{DX} for 
a very nice account of the general theory of orthogonal polynomials of several 
variables, which also indicates the two possible ways to proceed.

One possible extension is linked to the theory of symmetric 
functions. In this setting, polynomials which are eigenfunctions of 
$q$-difference operators were proposed by Macdonald \cite{M} and 
Koornwinder \cite{K}, see also \cite{vD} for connections with the bispectral 
problem mentioned above and \cite{vDS} for multivariable $q$-Racah 
polynomials. These polynomial systems are associated with root systems and 
the corresponding polynomials are invariant under the action of the Weyl group.

In a different vein, we can look for multivariable extensions, within the 
usual theory of polynomials of several variables, i.e. we consider an inner 
product in $\R^d$ and we apply the Gram-Schmidt process to all monomials, 
respecting the total degree ordering. Within this context, multivariable 
Askey-Wilson polynomials $\{P_d(n;x;\al):n\in\N_0^d,\; x\in\R^d\}$ depending 
on $d+3$ parameters $\al_0,\al_2,\dots,\al_{d+2}$ were discovered by Gasper 
and Rahman \cite{GR2}. These polynomials represent $q$-analogs of the 
multivariable Wilson polynomials defined by Tratnik \cite{Tr}. 

In the present paper we prove that the polynomials of Gasper 
and Rahman are also bispectral. More precisely, we define two commutative 
algebras $\cAz$ and $\cAn$ of operators which are simultaneously diagonalized 
by the polynomials $P_d(n;x;\al)$. The algebra $\cAz$ is generated by 
$d$ algebraically independent  $q$-difference operators $\fLz_1,\dots,\fLz_d$ 
acting on the variables $z_1,z_2,\dots,z_d$ where 
$x_j=\frac{1}{2}(z_j+z_{j}^{-1})$. Thus the operators $\fLz_j$ act naturally 
on the vector space $\C[x_1,\dots,x_d]$ and are diagonalized by 
the polynomials $P_d(n;x;\al)$. 
Likewise, the algebra $\cAn$ is generated by $d$ algebraically independent 
difference operators $\fLn_1,\dots,\fLn_d$ acting on the 
variables $n_1,\dots,n_d$ and the polynomials $P_d(n;x;\al)$ are 
eigenfunctions of these operators (considered as elements of the vector space 
of complex-valued functions defined on $\N_0^d$).

The paper extends the results in our joint work 
with Geronimo \cite{GI} and it raises a series of interesting questions. For 
instance in the one-dimensional case, for specific values of the free 
parameters, the Askey-Wilson recurrence operator $L^n$ commutes with a 
difference operator of odd order. Jointly with Haine \cite{HI2} we explored 
this fact to connect the above theory to algebraic curves, having specific 
singularities, or equivalently, to specific soliton solutions of the Toda 
lattice hierarchy, using the correspondence established by van 
Moerbeke-Mumford \cite{vMM,Mum} and Krichever \cite{Kr}. 
In particular, this led to a different explanation of 
the bispectral property using algebro-geometric considerations \cite{HI1}. 
Moreover, this approach suggested using techniques from integrable systems 
(such as the Darboux transformation) to construct extensions of the 
Askey-Wilson polynomials which satisfy higher-order $q$-difference equations 
\cite{HI2}. In the multivariable case, algebro-geometric methods were used by 
Chalykh \cite{Ch}, within the context of symmetric functions, to give more 
elementary proofs of several of Macdonald's conjectures. It is a challenging 
problem to construct a Baker-Akhiezer type function for the operators 
discussed in this paper and prove the duality using algebro-geometric 
tools. Another interesting question is to classify the multivariable 
orthogonal polynomials that satisfy $q$-difference equations. 
Recently discrete multivariable orthogonal polynomials satisfying second-order 
difference equations were classified in our joint work with Xu \cite{IX}.

The paper is organized as follows. In \seref{se2} we recall the basic 
definitions and orthogonal properties of the one-dimensional Askey-Wilson 
polynomials as well as the multivariable extension proposed by Gasper and 
Rahman. We also show that a change of variables leads to the multivariable 
$q$-Racah polynomials discussed in \cite{GR3}. In \seref{se3} we define a 
$q$-difference operator $\cL_d$ acting on the variables $z_1,z_2,\dots,z_d$. 
We prove that it preserves the ring of polynomials in $x_1,x_2,\dots,x_d$ 
and we establish its triangular structure\footnote{with respect to 
the total degree ordering}. In \seref{se4} we show that 
$\cL_d$ is self-adjoint with respect to the inner product defined 
by the multivariable Askey-Wilson measure. This leads to the construction 
of the commutative algebra $\cAz$ diagonalized by the multivariable 
Askey-Wilson polynomials. In \seref{se5} we iterate an identity of Sears 
to show that, appropriately normalized, the polynomials $P_d(n;x;\al)$ 
possess a certain duality between the discrete variables 
$(n_1,n_2,\dots,n_d)$ and the continuous variables $(z_1,z_2,\dots,z_d)$. 
Using this duality we obtain the commutative algebra $\cAn$ of difference 
operators acting on the variables $n_1,n_2,\dots,n_d$ which is also 
diagonalized by $P_d(n;x;\al)$, thus proving the bispectrality.

\section{Multivariable Askey-Wilson polynomials}\label{se2}
\subsection{Notations and one-dimensional theory}\label{ss2.1}
Throughout the paper we assume that $q$ is a real number in the open interval 
$(0,1)$. We shall use the standard $q$-shifted factorials
\begin{equation*}
(a;q)_n=\prod_{k=0}^{n-1}(1-aq^{k}), \quad
(a;q)_{\infty}=\prod_{k=0}^{\infty}(1-aq^{k}) \text{ and }
(a_1,a_2,\dots,a_k;q)_n=\prod_{j=1}^k(a_j;q)_n,
\end{equation*}
and the corresponding basic hypergeometric series, see \cite{GR1} for more 
details. We denote by $p_n(x;a,b,c,d)$ the Askey-Wilson polynomials \cite{AW}
\begin{equation}\label{2.1}
p_n(x;a,b,c,d)=\frac{(ab,ac,ad;q)_n}{a^n}
\fpt{q^{-n}}{abcdq^{n-1}}{az}{az^{-1}}{ab}{ac}{ad},
\end{equation}
where $x=\frac{1}{2}(z+\frac{1}{z})$. When $a,b,c,d$ are such that 
$\max(|a|,|b|,|c|,|d|)<1$ the polynomials defined by \eqref{2.1} satisfy 
the orthogonality relation
\begin{equation}\label{2.2}
\frac{1}{2\pi }\int_{-1}^{1}p_n(x;a,b,c,d)p_m(x;a,b,c,d)
\frac{w(z;a,b,c,d)}{\sqrt{1-x^2}}dx=\delta_{n,m}h_n,
\end{equation}
where 
\begin{equation}\label{2.3}
w(z;a,b,c,d)=\frac{(z^2,z^{-2};q)_{\infty}}
{(az,az^{-1},bz,bz^{-1},cz,cz^{-1},dz,dz^{-1};q)_{\infty}}
\end{equation}
and
\begin{equation}\label{2.4}
h_n=\frac{(abcdq^{n-1};q)_n(abcdq^{2n};q)_{\infty}}
{(q^{n+1},abq^n,acq^n,adq^n,bcq^n,bdq^n,cdq^n;q)_{\infty}}.
\end{equation}

\subsection{Multivariable extensions}
Consider $x=(x_1,x_2,\dots,x_d)\in\R^d$. For $|x_j|\leq 1$ we put 
\begin{equation}\label{2.5}
x_j=\cos(\theta_j)=\frac{1}{2}(z_j+z_j^{-1}), 
\end{equation}
where $z_j=e^{i\theta_j}$ for $j=1,2,\dots, d$. We define a weight 
function $w(z)=w_d(z;\al)$ depending on $d+3$ nonzero real parameters 
$\al_0,\al_1,\dots,\al_{d+2}$ by
\begin{equation}\label{2.6}
w(z)=\frac{\prod_{j=1}^{d}(z_j^2,z_j^{-2};q)_{\infty}}
{\prod_{k=0}^{d}\prod_{\ep_1,\ep_2\in\{-1,1\}}\;
(\al_{k+1}\al_{k}^{-1}z_{k+1}^{\ep_1}z_{k}^{\ep_2};q)_{\infty}},
\end{equation}
with the convention that $z_0=\al_0$ and $z_{d+1}=\al_{d+2}$. We shall 
assume that the parameters $\al_k$ are such that if $|z_j|=1$ for 
$j=1,2,\dots,d$ then 
$$|\al_{k+1}\al_{k}^{-1}z_{k+1}^{\pm 1}z_{k}^{\pm 1}|<1 \quad\text{ for }\quad
k=0,1,\dots,d.$$
It is easy to see that this is equivalent to the constraints
\begin{equation}\label{2.7}
\begin{split}
&0<|\al_{d+1}|<|\al_{d}|<\cdots<|\al_{1}|<\min(1,|\al_0|^2)\\
&\qquad \frac{|\al_{d+1}|}{|\al_{d}|}<|\al_{d+2}|
<\frac{|\al_{d}|}{|\al_{d+1}|}.
\end{split}
\end{equation}
Let us consider the following measure on $[-1,1]^d$
\begin{subequations}\label{2.8}
\begin{equation}\label{2.8a}
d\mu(x)
=\frac{w(z)}{(2\pi)^d}\prod_{j=1}^d\frac{dx_j}{\sqrt{1-x_j^2}}
\end{equation}
and the corresponding inner product
\begin{equation}\label{2.8b}
\langle f, g \rangle = \int_{[-1,1]^d}f(x)g(x)d\mu(x).
\end{equation}
\end{subequations}

The following theorem established in \cite{GR2} constructs 
an explicit basis of polynomials mutually orthogonal with respect to 
the inner product defined by \eqref{2.6}-\eqref{2.8}.

\begin{Theorem}\label{th2.1}
For $n=(n_1,n_2,\dots,n_d)\in\N_0^d$ define  
\begin{equation}\label{2.9}
P_d(n;x;\al)=\prod_{j=1}^{d}
p_{n_j}\left(x_j;\al_jq^{N_{j-1}},\frac{\al_j}{\al_0^2}
q^{N_{j-1}},
\frac{\al_{j+1}}{\al_{j}}z_{j+1},\frac{\al_{j+1}}{\al_{j}}z_{j+1}^{-1}\right),
\end{equation}
where $N_k=n_1+n_2+\cdots+n_k$ and $N_0=0$. If the parameters 
$\{\al_j\}_{j=0}^{d+2}$ satisfy \eqref{2.7}, then for every $n,m\in\N_0^d$ we 
have
\begin{equation}\label{2.10}
\langle
P_d(n;x;\al), P_d(m;x;\al)\rangle =\delta_{n,m}H_n,
\end{equation}
where the inner product is defined by \eqref{2.6} and \eqref{2.8}, and 
\begin{equation}\label{2.11}
\begin{split}
H_n=&\prod_{k=1}^d
\frac{\left(\frac{\al_{k+1}^2}{\al_0^2}q^{N_{k-1}+N_{k}-1};q\right)_{n_k}
\left(\frac{\al_{k+1}^2}{\al_0^2}q^{2N_{k}};q\right)_{\infty}}
{\left(q^{n_{k}+1},\frac{\al_{k}^2}{\al_{0}^2}q^{N_{k-1}+N_{k}}, 
\frac{\al_{k+1}^2}{\al_{k}^2}q^{n_{k}};q\right)_{\infty}}\\
&\qquad\times 
\prod_{\ep\in\{-1,1\}}\frac{1}
{(\al_{d+1}\al_{d+2}^{\ep}q^{N_d},\al_{d+1}\al_{d+2}^{\ep}\al_0^{-2}q^{N_d}
;q)_{\infty}}.
\end{split}
\end{equation}
\end{Theorem}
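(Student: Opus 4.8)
The plan is to prove \eqref{2.10} by induction on $d$, integrating out the single variable $x_1$ and reducing to the $(d-1)$-variable instance of the theorem with shifted parameters. The base case $d=1$ is exactly the one-dimensional orthogonality \eqref{2.2}. For the inductive step, the crucial observation is that among all the factors of $w(z)$ in \eqref{2.6}, the only ones depending on $z_1$ are the numerator term $(z_1^2, z_1^{-2}; q)_\infty$ and the two denominator blocks indexed by $k=0$ (in which $z_0 = \al_0$) and $k=1$. Writing these three pieces out explicitly, one finds that the $z_1$-dependent part of $w(z)$ is precisely the one-variable Askey--Wilson weight $w(z_1; a_1, b_1, c_1, d_1)$ of \eqref{2.3} with $a_1 = \al_1$, $b_1 = \al_1\al_0^{-2}$, $c_1 = \al_2\al_1^{-1}z_2$, $d_1 = \al_2\al_1^{-1}z_2^{-1}$. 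These coincide with the parameters of the $j=1$ factor in \eqref{2.9} (here $N_0 = 0$), and in particular they are the same for $P_d(n;x;\al)$ and $P_d(m;x;\al)$. Since $x_1$ appears in no other factor of the integrand and the constraints \eqref{2.7} force $\max(|a_1|,|b_1|,|c_1|,|d_1|)<1$ whenever $|z_2|=1$, Fubini's theorem lets me carry out the $x_1$-integration by \eqref{2.2}, which yields $\delta_{n_1,m_1}$ times the normalization constant $h_{n_1}(a_1,b_1,c_1,d_1)$.

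The heart of the matter is to see that this emitted constant re-dresses the surviving weight into the $(d-1)$-variable Gasper--Rahman weight. Substituting $a_1,\dots,d_1$ into \eqref{2.4} and simplifying (using $c_1d_1 = \al_2^2\al_1^{-2}$, $a_1c_1 = \al_2 z_2$, and so on), one sees that the full $z_2$-dependence of $h_{n_1}$ is confined to the denominator factor $(\al_2 z_2 q^{n_1}, \al_2 z_2^{-1}q^{n_1}, \al_2\al_0^{-2}z_2 q^{n_1}, \al_2\al_0^{-2}z_2^{-1}q^{n_1}; q)_\infty$. Setting $\tilde\al_0 = \al_0$, $\tilde\al_k = \al_{k+1}q^{n_1}$ for $1\le k\le d$, $\tilde\al_{d+1} = \al_{d+2}$, and relabelling $\tilde z_i = z_{i+1}$, a direct check shows that the part of $w(z)$ remaining after the $k=0,1$ blocks are deleted, multiplied by the $z_2$-dependent factor of $h_{n_1}$ just displayed, is exactly $w_{d-1}(\tilde z;\tilde\al)$: its $k=0$ block is manufactured precisely by that denominator factor of $h_{n_1}$, while the blocks $k=1,\dots,d-1$ come from the original blocks $k=2,\dots,d$. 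Moreover, under the same substitution $\prod_{j=2}^d p_{n_j}(x_j;\dots)$ equals $P_{d-1}(\tilde n; \tilde x; \tilde\al)$ with $\tilde n = (n_2,\dots,n_d)$, because the shift $\tilde\al_k = \al_{k+1}q^{n_1}$ turns each $\tilde\al_iq^{\tilde N_{i-1}}$ into $\al_{i+1}q^{N_i}$. Since $n_1 = m_1$, the shifted parameters are identical for the two polynomials, so the remaining integral is $\langle P_{d-1}(\tilde n), P_{d-1}(\tilde m)\rangle$ against $w_{d-1}(\tilde z;\tilde\al)$, and the induction hypothesis applies once one checks that \eqref{2.7} is preserved by the shift (it is, since $|\tilde\al_k| = |\al_{k+1}|q^{n_1}$ respects the orderings).

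Putting the two steps together gives $\langle P_d(n),P_d(m)\rangle = \delta_{n_1,m_1}\,h_{n_1}^{\mathrm{const}}\,\langle P_{d-1}(\tilde n),P_{d-1}(\tilde m)\rangle$, where $h_{n_1}^{\mathrm{const}}$ is the $z_2$-independent part of $h_{n_1}$; the inductive hypothesis then finishes the vanishing for $n\neq m$ and identifies the norm as $H_n = h_{n_1}^{\mathrm{const}}\cdot H_{\tilde n}(\tilde\al)$. It remains to recognize this as \eqref{2.11}: the constant $h_{n_1}^{\mathrm{const}}$ is exactly the $k=1$ factor of \eqref{2.11} (with $N_0 = 0$, $N_1 = n_1$), and the accumulated powers $q^{2n_1}$ carried by $\tilde\al_k^2/\tilde\al_0^2$ convert the $k$-th factor of $H_{\tilde n}(\tilde\al)$ into the $(k+1)$-th factor of $H_n(\al)$, while the final boundary product over $\ep\in\{-1,1\}$ is produced by the $d=1$ base case. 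The argument is conceptual, but the main obstacle is the bookkeeping: one must verify at each level that the $q$-shifted factorials coming out of \eqref{2.4} match exactly the surviving factors of \eqref{2.6} — in particular that the $N_{j-1}$-dependent powers $q^{N_{j-1}}$ in \eqref{2.9} and the mixed powers $q^{N_{k-1}+N_k}$ in \eqref{2.11} are reproduced by the telescoping shift $\tilde\al_k = \al_{k+1}q^{n_1}$. I would isolate the single-variable dressing computation as a lemma and then let the induction run.
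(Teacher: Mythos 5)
Your proposal is correct and follows essentially the same route as the paper: an induction on $d$ that splits off the $z_1$-dependent part of $w(z)$ as a one-variable Askey--Wilson weight with parameters $(\al_1,\al_1\al_0^{-2},\al_2\al_1^{-1}z_2,\al_2\al_1^{-1}z_2^{-1})$, integrates out $x_1$ via \eqref{2.2}, and absorbs the $z_2$-dependent part of $h_{n_1}$ into the $(d-1)$-variable weight with the shifted parameters $\alt_k=\al_{k+1}q^{n_1}$, $\alt_0=\al_0$, $\alt_{d+1}=\al_{d+2}$. Your bookkeeping of the norm (the $k$-th factor of $H_{\nt}(\alt)$ becoming the $(k{+}1)$-th factor of $H_n(\al)$, with the boundary product emerging at the base case) and your explicit check that \eqref{2.7} is preserved match, and slightly amplify, the paper's argument.
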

For the convenience of the reader we sketch the proof.
\begin{proof}
We can write the weight \eqref{2.6} as $w(z)=w_1(z)w_2(z)$, where 
\begin{subequations}\label{2.12}
\begin{equation}\label{2.12a}
w_1(z)=\frac{(z_1^2,z_1^{-2};q)_{\infty}}
{\prod_{k=0}^{1}\prod_{\ep_1,\ep_2\in\{-1,1\}}\;
(\al_{k+1}\al_{k}^{-1}z_{k+1}^{\ep_1}z_{k}^{\ep_2};q)_{\infty}},
\end{equation}
is the one-dimensional Askey-Wilson weight for $z_1$, depending also on $z_2$ 
and 
\begin{equation}\label{2.12b}
w_2(z)= \frac{\prod_{j=2}^{d}(z_j^2,z_j^{-2};q)_{\infty}}
{\prod_{k=2}^{d}\prod_{\ep_1,\ep_2\in\{-1,1\}}\;
(\al_{k+1}\al_{k}^{-1}z_{k+1}^{\ep_1}z_{k}^{\ep_2};q)_{\infty}},
\end{equation}
\end{subequations}
is independent of $z_1$. Notice also that in the right-hand side of equation 
\eqref{2.9} only the first polynomial 
$p_{n_1}$ depends on $z_1$. Writing the integral representation of the 
inner-product in \eqref{2.10} we see that the only terms depending on $x_1$ 
are $p_{n_1}$, $p_{m_1}$ and $w_1(z)/\sqrt{1-x_1^2}$. Using \eqref{2.2} we 
obtain
\begin{equation*}
\frac{1}{2\pi }\int_{-1}^{1}p_{n_1}p_{m_1}
\frac{w_1(z)}{\sqrt{1-x_1^2}}dx_1=\delta_{n_1,m_1}h_{n_1}\tilde{w}_1(z_2),
\end{equation*}
where 
\begin{equation*}
h_{n_1}=\frac{\left(\frac{\al_2^2}{\al_0^2}q^{n_1-1};q\right)_{n_1}
\left(\frac{\al_2^2}{\al_0^2}q^{2n_1};q\right)_{\infty}}
{\left(q^{n_1+1},\frac{\al_1^2}{\al_0^2}q^{n_1},
\frac{\al_2^2}{\al_1^2}q^{n_1} ;q\right)_{\infty}},
\end{equation*}
and 
\begin{equation*}
\tilde{w}_1(z_2)=\prod_{\ep\in\{-1,1\}}\frac{1}
{\left(\al_2q^{n_1}z_2^{\ep}
,\frac{\al_2}{\al_0^2}q^{n_1}z_2^{\ep};q\right)_{\infty}}.
\end{equation*}
It easy to see that $\tilde{w}_1(z_2)w_2(z)$ coincides with the weight 
$w_{d-1}(\zt,\alt)$ given in \eqref{2.6} for the variables  
$\zt_1=z_2$,\dots,$\zt_{d-1}=z_d$ and parameters 
$\alt_0=\al_0$, $\alt_1=\al_2q^{n_1}$,  $\alt_2=\al_3q^{n_1}$,\dots, 
$\alt_{d}=\al_{d+1}q^{n_1}$, $\alt_{d+1}=\al_{d+2}$. Moreover, if 
we denote $\nt=(n_2,n_3,\dots,n_d)\in\N_0^{d-1}$ one can check that 
$P_{d-1}(\nt;\xt;\alt)$ coincides with the product $\prod_{j=2}^{d}p_{n_j}$ 
consisting of the last $(d-1)$ polynomials for $P_d(n;x;\al)$. 
The proof now follows by induction. 
\end{proof}

\begin{Remark}\label{re2.2}
One can easily obtain the parametrization used by 
Gasper and Rahman \cite{GR2}, by introducing parameters $a,b,c,d$, 
$a_2,a_3,\dots,a_{d}$ related to $\al_j$ by the formulas
$a=\al_1$, $b=\al_1/\al_0^2$, $c=\al_{d+1}\al_{d+2}/\al_{d}$, 
$d=\al_{d+1}/(\al_{d}\al_{d+2})$ and $a_{k+1}=\al_{k+1}/\al_{k}$ 
for $k=1,2,\dots,d-1$.
\end{Remark}

\begin{Remark}\label{re2.3}
The multivariable $q$-Racah polynomials discussed in \cite{GR3} can be 
obtained as follows. Suppose 
\begin{equation*}
\frac{\al_{d+2}}{\al_{d+1}}=q^{N}, \text{ where }N\in\N.
\end{equation*}
Making the substitution 
\begin{equation*}
z_k=\al_kq^{y_k} \text{ for }k=0,1,\dots,d+1
\end{equation*}
we see that polynomials in \eqref{2.9} become the multivariable 
$q$-Racah polynomials
\begin{equation}\label{2.13}
R_n=\prod_{k=1}^{d}
r_{n_k}\left(y_k-N_{k-1};\frac{\al_k^2}{\al_0^2}q^{2N_{k-1}-1},
\frac{\al_{k+1}^2}{q\al_k^2},\al_k^2q^{y_{k+1}+N_{k-1}},y_{k+1}-N_{k-1}\right)
\end{equation}
where $r_k$ are the one dimensional $q$-Racah polynomials defined by 
\begin{equation*}
r_k(y;a,b,c,N)=(aq,bcq,q^{-N};q)_{k}(q^{N}/c)^{k/2}
\fpt{q^{-k}}{abq^{k+1}}{q^{-y}}{cq^{y-N}}{aq}{bcq}{q^{-N}}.
\end{equation*}
The polynomials $R_n$ are orthogonal on 
$0=y_0\leq y_1\leq y_2\leq \cdots \leq y_d\leq y_{d+1}=N$ 
with respect to the weight
\begin{equation*}
\rho(y)=\prod_{k=0}^d\frac{(\al_{k+1}^2/\al_{k}^2;q)_{y_{k+1}-y_{k}}
(\al_{k+1}^2;q)_{y_{k+1}+y_{k}}}
{(q;q)_{y_{k+1}-y_{k}}(q\al_{k}^2;q)_{y_{k+1}+y_{k}}}
\prod_{k=1}^{d}(1-\al_k^2q^{2y_k})
\left(\frac{\al_{k-1}}{\al_{k}}\right)^{2y_k}.
\end{equation*}
The parametrization in \cite{GR3} can be obtained if we replace 
$\al_0,\dots, \al_{d+1}$ by $a_1,\dots,a_{d+1},b$ related via the 
formulas $a_1=\al_1^2$, $a_k=\al_{k}^2/\al_{k-1}^2$ for $k=2,\dots,d+1$ and 
$b=\al_1^2/(q\al_0^2)$.

Thus, all difference equations derived for the multivariable Askey-Wilson 
polynomials later in the paper, can be translated to the 
multivariable $q$-Racah polynomials, using the change of variables and 
parameters above.
\end{Remark}

\section{$q$-Difference operators in $\C^d$}\label{se3}
In this section we construct a $q$-difference operator $\cL_d$ which is 
triangular and self-adjoint with respect to the inner product 
\eqref{2.6}-\eqref{2.8}.

\subsection{Basic definitions}\label{ss3.1}
Consider the ring $\cPz=\C[z_1^{\pm1},z_2^{\pm1},\dots,z_d^{\pm1}]$ of 
Laurent polynomials in the variables $z_1,z_2,\dots,z_d$ with complex 
coefficients. We denote by $\cPx=\C[x_1,x_2,\dots,x_d]$ the subring of 
$\cPz$ consisting of polynomials in the variables 
$x_1,x_2,\dots,x_d$, where 
\begin{equation}\label{3.1}
x_j=\frac{1}{2}(z_j+z_j^{-1}) \qquad\text{for }j=1,2,\dots,d.
\end{equation}

For $j=1,2,\dots,d$ we define an automorphism $I_j$ of $\cPz$ 
by
\begin{equation}\label{3.2}
I_j(z_j)=z_j^{-1} \text{ and }I_j(z_k)=z_k \text{ for }j\neq k.
\end{equation}
Clearly, $I_j$ is an involution (i.e. $I_j\circ I_j=\Id$) which preserves 
$\cPx$. Conversely, if a polynomial $p\in\cPz$ is preserved 
by the involutions $I_j$ for $j=1,2,\dots,d$ then  $p\in\cPx$.

Let $\{e_1,e_2,\dots,e_d\}$ be the standard basis for $\C^d$. 
We denote by $\E{z_j}$, $\fs{z_j}$ and $\bs{z_j}$, respectively,  
the $q$-shift, forward and backward difference operators in the $j$-th 
coordinate acting on functions $f(z)$ as follows
\begin{align*}
\E{z_j}f(z)&=f(z_1,z_2,\dots,z_jq,\dots,z_d)\\ 
\fs{z_j} f(z)&=(\E{z_j}-1)f(z)\\  
\bs{z_j} f(z)&=(1-\E{z_j}^{-1})f(z).
\end{align*}
Throughout the paper we use the standard multi-index notation. For instance, 
if $\nu=(\nu_1,\nu_2,\dots,\nu_p)\in\Z^d$ then 
$$z^{\nu}=z_1^{\nu_1}z_2^{\nu_2}\cdots z_d^{\nu_d},\quad
\E{z}^{\nu}=\E{z_1}^{\nu_1}\E{z_2}^{\nu_2}\cdots \E{z_d}^{\nu_d},\quad
zq^{\nu}=(z_1q^{\nu_1},z_2q^{\nu_2},\dots,z_dq^{\nu_d})$$
and $|\nu|=\nu_1+\nu_2+\cdots+\nu_d$.

We denote by $\cDz=\C(z_1,\dots,z_d)[\E{z_1}^{\pm1},\dots,\E{z_d}^{\pm1}]$ 
the associative algebra of $q$-difference operators with rational in $z$ 
coefficients, i.e. the elements of $\cDz$ are operators $L$ of the form 
$$L=\sum_{\nu\in S}l_\nu(z)\E{z}^{\nu},$$
where $S$ is a finite subset of $\Z^d$ and $l_{\nu}(z)$ are rational functions 
of $z$. Thus, the algebra $\cDz$ is generated by rational functions of $z$, 
the shift operators $\E{z_1},\E{z_2},\dots,\E{z_d}$ and their inverses 
$\E{z_1}^{-1},\E{z_2}^{-1},\dots,\E{z_d}^{-1}$ subject to the relations
\begin{equation}\label{3.3}
\E{z_j}\cdot g(z)=g(zq^{e_j})\E{z_j},
\end{equation}
for  all rational functions  $g(z)$ and for $j=1,2,\dots,d$. For every 
$k\in\{1,2,\dots,d\}$ the involution $I_k$ can be naturally extended to 
$\cDz$, by defining 
\begin{equation}\label{3.4}
I_k(\E{z_k})=\E{z_k}^{-1} \quad \text{ and }
\quad I_k(\E{z_j})=\E{z_j} \text{ for } j\neq k.
\end{equation} 
Indeed, it is easy to see that $I_k$ is correctly defined because the 
relations \eqref{3.3} are preserved under the action of $I_k$, i.e. we have 
$$I_k(\E{z_j})\cdot I_k(g(z))=I_k(g(zq^{e_j}))I_k(\E{z_j}),$$
for $k,j\in\{1,2,\dots,d\}$.

For the sake of brevity we say that $L\in\cDz$ is $I$-invariant 
if $I_j(L)=L$ for all $j\in\{1,2,\dots, d\}$.

For $k\in\N_0$ we denote by $\cPx^k$ the space of polynomials in 
$\cPx$ of (total) degree at most $k$ in the variables 
$x_1,x_2,\dots,x_d$, with the convention that $\cPx^{-1}=\{0\}$.
\begin{Definition}\label{de3.1}
We say that a linear operator $L$ on $\cPx$ is triangular if for every 
$k\in\N_0$ there is $c_k\in\C$ such that 
$$L(p)=c_k p \mod{\cPx^{k-1}} \text{ for all }p\in\cPx^{k}.$$
\end{Definition}

\subsection{The triangular operator $\cL_d$}\label{ss3.2}
Below we define an operator which is $I$-invariant and triangular. Clearly, an 
operator which is $I$-invariant is uniquely determined by its coefficient of 
$\E{z}^\nu$ (or equivalently $\fs{z}^{\nu}$) with $\nu_i\geq 0$ for 
$i=1,2,\dots,d$.

Let $\nu=(\nu_1,\nu_2,\dots,\nu_d)\in\{0,1\}^d\setminus \{0\}^d$, and let 
$\{\nu_{i_1},\nu_{i_2},\dots,\nu_{i_s}\}$ be the nonzero components of $\nu$ 
with $1\leq i_1<i_2<\cdots<i_s\leq d$ and $s\geq 1$. Denote 
\begin{subequations}\label{3.5}
\begin{equation}\label{3.5a}
\begin{split}
A_{\nu}=&(1-\al_{{i_1}}z_{{i_1}})
(1-\al_{{i_1}}z_{{i_1}}/\al_0^2)\\
&\times \frac{\prod_{k=2}^s
(1-\al_{{i_k}}z_{{i_k}}z_{{i_{k-1}}}/\al_{{i_{k-1}}})
(1-q\al_{{i_k}}z_{{i_k}}z_{{i_{k-1}}}/\al_{{i_{k-1}}})}
{\prod_{k=1}^s(1-z_{{i_k}}^2)(1-qz_{{i_k}}^2)}\\
&\times (1-\al_{d+1}\al_{d+2}z_{{i_s}}/\al_{{i_s}})
(1-\al_{d+1}z_{{i_s}}/(\al_{{i_s}}\al_{d+2})).
\end{split}
\end{equation}

An arbitrary $\nu\in\Z^d$ can be decomposed as $\nu=\nu^{+}-\nu^{-}$, 
where $\nu^{\pm}\in \N_0^d$ with components $\nu_j^{+}=\max(\nu_j,0)$ and 
$\nu_j^{-}=-\min(\nu_j,0)$. For $\nu\in\{-1,0,1\}^d\setminus \{0,1\}^d$ we 
define 
\begin{equation}\label{3.5b}
A_\nu=I^{\nu^{-}}(A_{\nu^{+}+\nu^{-}}).
\end{equation}
\end{subequations}
Here $I^{\nu^{-}}$ is the composition of the involutions corresponding to 
the positive coordinates of $\nu^{-}$.

Finally, we define the operator 
\begin{equation}\label{3.6}
\cL_d=\cL_d(z_1,\dots,z_d;\al_{0},\al_{1},\dots,\al_{d+2})
=\sum_{\nu\in\{-1,0,1\}^d\setminus \{0\}^d}(-1)^{|\nu^{-}|}A_\nu
\fs{z}^{\nu^+}\bs{z}^{\nu^-}.
\end{equation}
Since $I_i(\fs{z_i})=-\bs{z_i}$ and $I_i(\fs{z_j})=\fs{z_j}$ for 
$i\neq j$, the operator defined above is $I$-invariant. 

\begin{Lemma}\label{le3.2} 
Let $L\in\cDz$ be an $I$-invariant $q$-difference operator. If 
\begin{equation}\label{3.7}
\prod_{j=1}^d(1-z_j^2)L(p)\in\cPz \text{ for every }p\in\cPx,
\end{equation}
then $L$ preserves $\cPx$, i.e. $L(\cPx)\subset \cPx$. In particular, 
the operator $\cL_d$ defined by \eqref{3.5}-\eqref{3.6} preserves $\cPx$.
\end{Lemma}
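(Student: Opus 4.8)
The plan is to split the statement into a general criterion and its application to $\cL_d$, and to dispatch the criterion first. Fix $p\in\cPx$ and set $R=L(p)$. Since $p$ is fixed by every $I_j$ and the extension of $I_j$ to $\cDz$ satisfies $I_j(L(f))=I_j(L)(I_j(f))$, the invariance $I_j(L)=L$ gives $I_j(R)=I_j(L)(I_j(p))=L(p)=R$, so $R$ is an $I$-invariant rational function. By hypothesis \eqref{3.7} the Laurent polynomial $Q:=\prod_{j=1}^d(1-z_j^2)\,R$ lies in $\cPz$, so it suffices to show that $\prod_{j=1}^d(1-z_j^2)$ divides $Q$ in $\cPz$: then $R\in\cPz$, and an $I$-invariant Laurent polynomial lies in $\cPx$ by the characterization following \eqref{3.2}.

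To obtain this divisibility, first I would fix $j$ and record that $I_j\bigl(\prod_k(1-z_k^2)\bigr)=-z_j^{-2}\prod_k(1-z_k^2)$, so that $I_j(R)=R$ rewrites as the identity $z_j^2\,I_j(Q)=-Q$ in $\cPz$. Evaluating this identity at $z_j=1$ and at $z_j=-1$ (the two fixed points of $z_j\mapsto z_j^{-1}$) forces $Q|_{z_j=1}=Q|_{z_j=-1}=0$, whence $(1-z_j)$ and $(1+z_j)$ both divide $Q$. As $\cPz$ is a localization of a polynomial ring, hence a UFD, and the factors $1-z_j^2$ for distinct $j$ are pairwise coprime nonunits, their product divides $Q$, which is exactly what is needed.

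It remains to check that $\cL_d$ satisfies \eqref{3.7}, and this I expect to be the main obstacle. Since each $\fs{z}^{\nu^+}\bs{z}^{\nu^-}$ maps $\cPx\subset\cPz$ into $\cPz$, the only source of poles in $\cL_d(p)$ is the denominator $\prod_{k=1}^s(1-z_{i_k}^2)(1-qz_{i_k}^2)$ of $A_\nu$ in \eqref{3.5a}, that is, poles along $z_i=\pm1$ and $z_i=\pm q^{-1/2}$ for the active coordinates. Multiplying by $\prod_j(1-z_j^2)$ clears the poles along $z_i=\pm1$ outright, so the real issue is the spurious poles along $z_i=\pm q^{-1/2}$. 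Fixing a coordinate, say $i=1$, I would observe that such a pole occurs only in the terms with $\nu_1=+1$, where the operator factors as $\fs{z_1}$ times $\prod_{j\neq1}\fs{z_j}^{\nu_j^+}\bs{z_j}^{\nu_j^-}$; the latter factors commute with $I_1$, so $\tilde p:=\prod_{j\neq1}\fs{z_j}^{\nu_j^+}\bs{z_j}^{\nu_j^-}(p)$ is still invariant under $z_1\mapsto z_1^{-1}$.

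The key computation is then that $\fs{z_1}\tilde p=\tilde p(z_1q,\dots)-\tilde p(z_1,\dots)$ vanishes at $z_1=q^{-1/2}$: there $z_1q=q^{1/2}=z_1^{-1}$, so $x_1$ takes the same value at both arguments and $I_1$-invariance of $\tilde p$ gives $\tilde p(q^{1/2},\dots)=\tilde p(q^{-1/2},\dots)$. This simple zero cancels the simple pole of $\prod_j(1-z_j^2)\,A_\nu$ at $z_1=q^{-1/2}$; the pole at $z_1=-q^{-1/2}$ is identical, and the poles produced by the $\nu_1=-1$ terms at $z_1=\pm q^{1/2}$ are removed the same way via $\bs{z_1}$ (or, more quickly, by the $I_1$-invariance of $\cL_d(p)$). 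Running this over all coordinates shows $\prod_j(1-z_j^2)\,\cL_d(p)\in\cPz$, and the general criterion then yields $\cL_d(\cPx)\subset\cPx$.
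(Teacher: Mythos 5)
Your proof is correct. The first half is exactly the paper's argument: the paper likewise observes that $L(p)$ is $I$-invariant, writes $L(p)=p_1(z)/\prod_{j=1}^d(z_j-z_j^{-1})$ (their anti-invariant factor is a unit multiple of your $\prod_j(1-z_j^2)$), deduces $I_j(p_1)=-p_1$ and hence vanishing at $z_j=\pm1$, and concludes divisibility — your identity $z_j^2\,I_j(Q)=-Q$ is the same computation in different clothing. Where you genuinely diverge is the verification that $\cL_d$ satisfies \eqref{3.7}. The paper disposes of the spurious denominator factors $(1-qz_j^{\pm2})$ by an algebraic divisibility: $\fs{z_j}(x_j^k)$ is divisible by $\fs{z_j}(x_j)=\frac{(z_j^2q-1)(q-1)}{2z_jq}$ (and the $\bs{z_j}$ analogue), applied monomial by monomial to $p\in\cPx$, which cancels $(1-qz_j^2)$ outright. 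You instead evaluate at the points $z_1=\pm q^{-1/2}$, where $z_1q=z_1^{-1}$, and use $I_1$-invariance of the intermediate function $\tilde p$ to see that $\fs{z_1}\tilde p$ vanishes there. The two mechanisms are equivalent in effect: the paper's is a one-line $a^k-b^k$ factorization, but it requires splitting the action of $\fs{z}^{\nu^+}\bs{z}^{\nu^-}$ across monomials $x^n$; yours is more flexible in that it needs only $I_1$-invariance of $\tilde p$ (not membership in $\cPx$ — which indeed fails for $\tilde p$ after the other shifts have acted), and it explains the cancellation conceptually as a fixed-point phenomenon of the involution composed with the $q$-shift, which also lets you dispatch the reflected poles at $\pm q^{1/2}$ by symmetry. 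The one step worth making explicit in your write-up is that $\fs{z_1}\tilde p$ vanishes identically in $z_2,\dots,z_d$ along $z_1=\pm q^{-1/2}$ (your evaluation is uniform in the other variables), since it is this hyperplane vanishing, not vanishing at a point, that yields divisibility by $1-qz_1^2$ in the Laurent ring $\cPz$.
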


\begin{proof} Let $p(x)\in\cPx$. Since both $L$ and $p$ are $I$-invariant, 
it follows that $L(p)$ is also $I$-invariant and therefore to show that 
$L(p)\in\cPx$ it is enough to prove that $L(p)\in\cPz$. 
From \eqref{3.7} it follows that we can write $L(p)$ as a ratio 
$$L(p)=\frac{p_1(z)}{\prod_{j=1}^d(z_j-z_j^{-1})}, 
\text{ where }p_1(z)\in\cPz.$$
Notice that $I_j(z_j-z_j^{-1})=-(z_j-z_j^{-1})$, and therefore 
$I_j(p_1(z))=-p_1(z)$ for every $j=1,2,\dots,d$. This implies that 
$p_1(z)=0$ when $z_j=\pm 1$ and therefore $p_1(z)$ is divisible 
(in the ring $\cPz$) by $\prod_{j=1}^d(z_j-z_j^{-1})$, proving that 
$L(p)\in\cPz$.

It remains to show that $\cL_d$ satisfies the conditions of the Lemma. 
This follows easily from formulas \eqref{3.5}-\eqref{3.6} combined with the 
fact that 
$\fs{z_j}(x_j^k)$ is divisible by 
$\fs{z_j}(x_j)=\frac{(z_j^2q-1)(q-1)}{2z_jq}$, and 
$\bs{z_j}(x_j^k)$ is divisible by 
$\bs{z_j}(x_j)=\frac{(z_j^2-q)(q-1)}{2z_jq}$ for every $k\in\N$.
\end{proof}

\begin{Proposition}\label{pr3.3}
The operator $\cL_d$ defined by \eqref{3.5}-\eqref{3.6} is triangular. 
More precisely, we have
\begin{equation}\label{3.8}
\cL_d(p(x))
= -(1-q^{-k})\left(1-\frac{\al_{d+1}^2}{\al_0^2}q^{k-1}\right)p(x)
\mod{\cPx^{k-1}} 
\text{ for every } p(x)\in\cPx^k.
\end{equation}
\end{Proposition}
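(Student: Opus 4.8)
The plan is to compute the action of $\cL_d$ on a single monomial $x^a$ with $|a|=k$ and to read off its top-degree part by passing to the leading behaviour as $z\to\infty$. Concretely, I would substitute $z_j=tw_j$ and expand in powers of $t$ as $t\to\infty$; since $x_j\sim\tfrac{t}{2}w_j$, the monomial $x^a$ has leading term $2^{-k}t^kw^a$, and the degree-$k$ part of $\cL_d(x^a)$ is precisely the coefficient of $t^k$ in $\cL_d(x^a)|_{z=tw}$. By \leref{le3.2} the full operator preserves $\cPx$, but the individual summands $A_\nu\fs{z}^{\nu^+}\bs{z}^{\nu^-}(x^a)$ in \eqref{3.6} are only rational (their poles at $z_j=\pm1$ cancel only after summation). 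This is harmless for the $t\to\infty$ expansion, because those poles sit at finite $z$, so I may expand each summand separately and add.

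For a fixed $\nu$ with nonzero components $\{i_1<\cdots<i_s\}$ and signs $\epsilon_{i_r}=\pm1$, two ingredients enter. First, on the leading symbol each difference operator acts by a scalar: $\fs{z_j}$ contributes $(q^{a_j}-1)$ and $\bs{z_j}$ contributes $(1-q^{-a_j})$, so $\fs{z}^{\nu^+}\bs{z}^{\nu^-}(x^a)$ has leading term $2^{-k}t^kw^a\prod_{\epsilon_{i_r}=1}(q^{a_{i_r}}-1)\prod_{\epsilon_{i_r}=-1}(1-q^{-a_{i_r}})$, which is proportional to $w^a$. Second, I would record the $t$-degree $p(\nu)$ of $A_\nu$ under the same substitution: writing $u_r=1+\epsilon_{i_r}\in\{0,2\}$ and reading off the leading powers of $t$ in the factors of \eqref{3.5}, the net exponent is $p(\nu)=u_{1}+u_{s}+\sum_{r=2}^{s}u_{r-1}u_{r}-2\sum_{r=1}^{s}u_{r}$.

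The main obstacle is this degree count. The crux is to show $p(\nu)\le0$, with equality precisely when all the signs $\epsilon_{i_r}$ coincide; this is exactly what forces $\cL_d$ to preserve degree and to be triangular. I would establish it through the algebraic identity $p(\nu)=-2\sum_{r=2}^{s}(v_{r-1}-v_r)^2$ with $v_r=\tfrac12 u_r\in\{0,1\}$, which follows from $v_r^2=v_r$ and telescoping. Consequently only the two constant-sign families $\nu=\mathbf{1}_S$ and $\nu=-\mathbf{1}_S$ (over nonempty $S\subseteq\{1,\dots,d\}$) reach the top order $t^k$; every mixed-sign $\nu$ lands in $\cPx^{k-1}$. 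For the surviving families the leading coefficient of $A_\nu$ is a genuine ($w$-independent) constant — a short computation, using the telescoping of the $\al$-factors against the denominators in \eqref{3.5}, gives $\al_{d+1}^2/(q\al_0^2)$ for $\mathbf{1}_S$ and $1$ for $-\mathbf{1}_S$ — so the whole order-$t^k$ part is a multiple of $w^a$. This simultaneously yields triangularity (no mixing of monomials at top degree) and the value of $c_k$.

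Finally I would assemble $c_k$ by summing the surviving contributions, weighted by the sign $(-1)^{|\nu^-|}$ from \eqref{3.6}:
\begin{equation*}
c_k=\frac{\al_{d+1}^2}{q\al_0^2}\sum_{\emptyset\neq S}\prod_{j\in S}(q^{a_j}-1)+\sum_{\emptyset\neq S}(-1)^{|S|}\prod_{j\in S}(1-q^{-a_j}).
\end{equation*}
Each sum factorizes over $j$ (by adjoining the empty set and subtracting $1$), giving $\prod_j q^{a_j}-1=q^{k}-1$ and $\prod_j q^{-a_j}-1=q^{-k}-1$ respectively. Hence $c_k=\frac{\al_{d+1}^2}{q\al_0^2}(q^{k}-1)-(1-q^{-k})$, which rearranges to $-(1-q^{-k})(1-\frac{\al_{d+1}^2}{\al_0^2}q^{k-1})$, as claimed. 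I expect the only delicate points to be the uniform bookkeeping of the leading powers of $t$ in \eqref{3.5} (handled by the $u_r$-substitution) and the justification that term-by-term expansion at $t=\infty$ is legitimate despite the uncancelled finite poles.
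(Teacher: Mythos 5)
Your proposal is correct and follows essentially the same route as the paper's proof: reduce to monomials $x^a$, track only the top total degree in $z$ (your homogenization $z=tw$ is just a formalization of the paper's ``highest power of $z$'' argument), show by a degree count on $A_\nu$ that only the constant-sign $\nu$ survive at top order with $w$-independent leading constants $\al_{d+1}^2/(q\al_0^2)$ and $1$, and then sum the two families via the factorization identity to get $c_k=\frac{\al_{d+1}^2}{q\al_0^2}(q^{k}-1)+(q^{-k}-1)=-(1-q^{-k})\bigl(1-\frac{\al_{d+1}^2}{\al_0^2}q^{k-1}\bigr)$. Your identity $p(\nu)=-2\sum_{r=2}^{s}(v_{r-1}-v_r)^2$ is a pleasant quantitative justification of the degree drop for mixed-sign $\nu$, which the paper merely asserts (numerator degree $<4s$), but it is the same key step, so the two arguments coincide in substance.
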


\begin{proof} From \leref{le3.2} we know that $\cL_d(p(x))\in\cPx$ for 
every $p(x)\in\cPx$. It is enough to prove that \eqref{3.8} holds 
when $p(x)=x^n=x_1^{n_1}x_2^{n_2}\cdots x_d^{n_d}$. Clearly the highest 
(total) power of $x$ in $\cL_d(x^n)$ can be uniquely determined by the highest 
power of $z$. Thus it is enough to see that, after canceling the denominators
of the coefficients $A_{\nu}$ of the operator $\cL_d$, we have 
\begin{equation*}
\begin{split}
\cL_d(x^n)&=-\frac{1}{2^{|n|}}(1-q^{-|n|})\left(1-\frac{\al_{d+1}^2}
{\al_0^2}q^{|n|-1}\right)z^n\\
&\qquad +\text{ a linear combination of $z^k$ with $|k|<|n|$}.
\end{split}
\end{equation*}
Let $0\neq\nu\in\{-1,0,1\}^{d}$. Notice that 
\begin{subequations}\label{3.9}
\begin{align}
\fs{z_j}(x_j^{n_j})&= \frac{1}{2^{n_j}}(q^{n_j}-1)z_j^{n_j}+O(z_j^{n_j-1})
                \label{3.9a}\\
\bs{z_j}(x_j^{n_j})&= \frac{1}{2^{n_j}}(1-q^{-n_j})z_j^{n_j}+O(z_j^{n_j-1})
                \label{3.9b}
\end{align}
\end{subequations}
and therefore $\fs{z}^{\nu+}\bs{z}^{\nu_-}(z^n)$ is a linear combination 
of $z^n$ and $z^k$ where $|k|<|n|$. From \eqref{3.5} it follows 
that the denominator of $A_{\nu}$ is a polynomial of degree $4s$ in 
$z_1,z_2,\dots,z_d$, where $s$ denotes the number of nonzero components 
of $\nu$. However, the numerator is of degree $4s$ if and only if all the 
nonzero coordinates of $\nu$ have the same sign (i.e. they are all positive, 
or all negative). If at least two of the coordinates of $\nu$ have 
different signs, then the degree of the numerator is less than $4s$ in 
the variables $z_1,z_2,\dots,z_d$. Thus we conclude that 
$\cL_d(x^n)$ is a linear combination of $z^n$ and $z^k$ where $|k|<|n|$. 
Moreover, the coefficient of $z^n$ can be computed by extracting the 
coefficient of $z^n$ from the terms 
$(-1)^{|\nu^{-}|}A_\nu\fs{z}^{\nu^+}\bs{z}^{\nu^-}(x^n)$ for 
$0\neq \nu\in\{0,1\}^d\cup\{-1,0\}^d$ in the representation \eqref{3.6} of 
the operator $\cL_d$. Using formulas \eqref{3.9} we deduce that 
\begin{equation*}
\cL_d(x^n)=\frac{c_n}{2^{|n|}}z^n+\text{ linear combination of $z^k$ with 
$|k|<|n|$},
\end{equation*}
where $c_n=c_n^{+}+c_n^{-}$ with 
\begin{subequations}\label{3.10}
\begin{align}
c_n^{+}&= \frac{\al_{d+1}^2}{q\al_{0}^2}
\sum_{0\neq\nu\in\{0,1\}^d}
\prod_ {\begin{subarray}{c} s\in\{1,\dots,d\}\\ 
\text{such that }\nu_s=1 \end{subarray}}(q^{n_s}-1)\label{3.10a}\\
c_n^{-}&=\sum_{0\neq\nu\in\{0,-1\}^d}
\prod_ {\begin{subarray}{c} s\in\{1,\dots,d\}\\ 
\text{such that }\nu_s=-1 \end{subarray}}(q^{-n_s}-1).
\label{3.10b}
\end{align}
\end{subequations}
It is easy to see (for instance by induction on $d$) that 
\begin{equation*}
\sum_{0\neq\nu\in\{0,1\}^d}
\prod_ {\begin{subarray}{c} s\in\{1,\dots,d\}\\ 
\text{such that }\nu_s=1 \end{subarray}}(q^{n_s}-1)=q^{|n|}-1,
\end{equation*}
which combined with \eqref{3.10} gives
\begin{equation*}
c_n^{+}=\frac{\al_{d+1}^2}{q\al_{0}^2}(q^{|n|}-1)\quad\text{ and }\quad
c_n^{-}=q^{-|n|}-1.
\end{equation*}
Thus 
\begin{equation*}
c_n=c_n^{+}+c_n^{-}=-(1-q^{-|n|})
\left(1-\frac{\al_{d+1}^2}{\al_0^2}q^{|n|-1}\right),
\end{equation*}
completing the proof.
\end{proof}

\section{The commutative algebra $\cAz$}\label{se4}
In this section we show that the operator $\cL_d$ defined in \seref{se3} 
is self-adjoint with respect to the inner product \eqref{2.6}-\eqref{2.8}. 
This allows us to construct a commutative subalgebra $\cAz$ of $\cDz$, 
generated by $d$ algebraically independent $q$-difference operators, which 
is diagonalized by the polynomials \eqref{2.9}.

\subsection{Self-adjointness of $\cL_d$}\label{ss4.1}
We start with a simple lemma.
\begin{Lemma}\label{le4.1} Consider an inner product defined by equations 
\eqref{2.8} and let $L=\sum_{\nu\in S} C_{\nu}(z)\E{z}^{\nu}\in\cDz$. 
If for every $\nu\in S$ the following conditions are 
satisfied
\begin{itemize}
\item[(i)] $C_{\nu}(z)w(z)=C_{-\nu}(zq^{\nu})w(zq^{\nu})$;
\item[(ii)] The function $C_{\nu}(z)w(z)$ is holomorphic on the 
$d$-dimensional torus\\ 
$\T^{\nu}_t=\{z\in\C^d:|z_j|=q^{-t\nu_j}\text{ for }j=1,2,\dots,d\}$ 
for every $t\in[0,1]$
\end{itemize}
then $L$ is self-adjoint with respect to the inner product \eqref{2.8}.
\end{Lemma}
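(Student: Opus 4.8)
The strategy is to realize $\langle\cdot,\cdot\rangle$ as a contour integral over the unit torus, to move the $q$-shift in each summand of $L$ off the first factor by a change of variables, and to undo the resulting contour displacement using hypotheses (i) and (ii). First I pass from the variables $x_j$ to $z_j=e^{i\theta_j}$ on the unit circle. Since the weight $w(z)$ of \eqref{2.6} and every polynomial in $\cPx$ are invariant under each involution $z_j\mapsto z_j^{-1}$, the substitution $x_j=\frac12(z_j+z_j^{-1})$ converts \eqref{2.8b} into
\begin{equation}\label{4.1-plan}
\langle F,G\rangle=\frac{1}{(4\pi)^d}\oint_{\T^d}F\,G\,w(z)\prod_{j=1}^d\frac{dz_j}{iz_j},
\qquad \T^d=\{z\in\C^d:|z_j|=1\},
\end{equation}
valid for $F,G\in\cPx$ viewed as (symmetric) Laurent polynomials in $z$. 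Because $Lf\in\cPx$ whenever $f\in\cPx$, I may insert $Lf=\sum_{\nu\in S}C_\nu(z)f(zq^\nu)$ into \eqref{4.1-plan} and use linearity of the integral to write $\langle Lf,g\rangle=\sum_{\nu\in S}I_\nu$, where
\begin{equation*}
I_\nu=\frac{1}{(4\pi)^d}\oint_{\T^d}C_\nu(z)\,f(zq^\nu)\,g(z)\,w(z)\prod_{j=1}^d\frac{dz_j}{iz_j}.
\end{equation*}

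Next I transform each $I_\nu$. The substitution $z_j\mapsto z_jq^{-\nu_j}$ turns $f(zq^\nu)$ into $f(z)$, leaves $\prod_j dz_j/z_j$ unchanged, and deforms the contour $\T^d$ into $\{|z_j|=q^{\nu_j}\}$, so that
\begin{equation*}
I_\nu=\frac{1}{(4\pi)^d}\oint_{\{|z_j|=q^{\nu_j}\}}C_\nu(zq^{-\nu})\,f(z)\,g(zq^{-\nu})\,w(zq^{-\nu})\prod_{j=1}^d\frac{dz_j}{iz_j}.
\end{equation*}
Applying hypothesis (i) at the point $zq^{-\nu}$ gives $C_\nu(zq^{-\nu})w(zq^{-\nu})=C_{-\nu}(z)w(z)$, so the integrand becomes $C_{-\nu}(z)w(z)f(z)g(zq^{-\nu})$.

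It remains to pull the contour $\{|z_j|=q^{\nu_j}\}$ back to $\T^d$. The factors $f(z)$ and $g(zq^{-\nu})$ are Laurent polynomials, hence holomorphic off the hyperplanes $z_j=0$, while by hypothesis (ii) applied to the index $-\nu\in S$ the factor $C_{-\nu}(z)w(z)$ is holomorphic on every torus $\T^{-\nu}_t=\{|z_j|=q^{t\nu_j}\}$, $t\in[0,1]$. This family sweeps $\T^d$ (at $t=0$) to $\{|z_j|=q^{\nu_j}\}$ (at $t=1$) through the region where the integrand is holomorphic, and since $C_{-\nu}(z)w(z)f(z)g(zq^{-\nu})\prod_j dz_j/z_j$ is a closed holomorphic $d$-form, its period is constant along this homotopy of $d$-cycles. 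Hence $I_\nu$ equals the same integral over $\T^d$, and summing over $\nu\in S$ — which satisfies $S=-S$, since (i) would otherwise force some nonzero $C_\nu$ to vanish — and reindexing $\mu=-\nu$ yields
\begin{equation*}
\langle Lf,g\rangle=\sum_{\mu\in S}\frac{1}{(4\pi)^d}\oint_{\T^d}C_\mu(z)g(zq^\mu)f(z)w(z)\prod_{j=1}^d\frac{dz_j}{iz_j}=\langle Lg,f\rangle=\langle f,Lg\rangle,
\end{equation*}
which is the asserted self-adjointness.

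The step I expect to be the main obstacle is the multivariate contour shift. The point is that holomorphy on each torus of the compact family $\bigcup_{t\in[0,1]}\T^{-\nu}_t$ is an open condition, so it propagates to an open neighborhood of this union, and such a neighborhood is all that homotopy invariance of the integral of a closed holomorphic form requires. One must also observe that the loci $z_j=0$ are never crossed, which holds because $q\in(0,1)$ keeps every radius $q^{t\nu_j}$ strictly positive and finite.
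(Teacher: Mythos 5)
Your proof is correct and takes essentially the same route as the paper's: rewrite the inner product as the torus integral \eqref{4.1}, make the termwise substitution $z\mapsto zq^{\pm\nu}$, apply (i) to convert the integrand, and use (ii) to deform the shifted contour $\{|z_j|=q^{\nu_j}\}$ back to $\T$ through the family of tori. The only differences are cosmetic --- you transform $\langle Lf,g\rangle$ into $\langle f,Lg\rangle$ where the paper shows $\langle f,L_{-\nu}g\rangle=\langle L_{\nu}f,g\rangle$ --- and you make explicit two points the paper leaves implicit, namely that (i) allows one to assume $S=-S$ and that the contour shift is homotopy invariance of the period of a closed holomorphic $d$-form on a neighborhood of the compact union of tori.
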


\begin{proof}
Using the change of variables \eqref{2.5} we can write the 
inner product as an integral over the $d$-dimensional torus 
$\T=\{z\in\C^d:|z_j|=1\text{ for }j=1,2,\dots,d\}$ 
\begin{equation}\label{4.1}
\langle f, g \rangle = \frac{1}{(4\pi i)^d}\int_{\T}f(z)g(z)w(z)
\prod_{j=1}^d\frac{dz_j}{z_j}.
\end{equation}
Denote $L_{\nu}=C_{\nu}(z)\E{z}^{\nu}$. Then from \eqref{4.1} we 
obtain
\begin{align*}
\langle f, L_{-\nu}g\rangle &= \frac{1}{(4\pi i)^d}
\int_{\T}f(z)C_{-\nu}(z)g(zq^{-\nu})w(z)\prod_{j=1}^d\frac{dz_j}{z_j}\\
\intertext{replacing $z$ by $zq^{\nu}$}
&= \frac{1}{(4\pi i)^d}
\int_{\T^{\nu}_{1}}C_{-\nu}(zq^{\nu})f(zq^{\nu})g(z)w(zq^{\nu})
\prod_{j=1}^d\frac{dz_j}{z_j}\\
\intertext{using (i)}
&=\frac{1}{(4\pi i)^d}
\int_{\T^{\nu}_1}C_{\nu}(z)f(zq^{\nu})g(z)w(z)
\prod_{j=1}^d\frac{dz_j}{z_j}\\
\intertext{using (ii) we can replace $\T^{\nu}_1$ by $\T^{\nu}_0=\T$}
&=\frac{1}{(4\pi i)^d}
\int_{\T}C_{\nu}(z)f(zq^{\nu})g(z)w(z)
\prod_{j=1}^d\frac{dz_j}{z_j}\\
&=\langle L_{\nu} f,  g\rangle.
\end{align*}
The proof follows immediately by writing $L$ as a sum of $L_{\nu}$'s.
\end{proof}
In order to apply \leref{le4.1} we need to rewrite the operator $\cL_d$ 
as a linear combination of the $q$-shift operators $\E{z}^{\nu}$. 

For $j\in\{0,1,\dots,d\}$ and $(k,l)\in\{0,1\}^2$ we define $B_{j}^{k,l}(z)$
as follows
\begin{subequations}\label{4.2}
\begin{align}
B_j^{0,0}(z)&=1+\frac{\al_{j+1}^2}{q\al_{j}^2}
-\frac{4\al_{j+1}x_{j}x_{j+1}}{(q+1)\al_{j}}\label{4.2a}\\
B_j^{0,1}(z)&=\left(1-\frac{\al_{j+1}z_{j}z_{j+1}}{\al_{j}}\right)
\left(1-\frac{\al_{j+1}z_{j+1}}{\al_{j}z_{j}}\right)
                                     \label{4.2b}\\
B_j^{1,0}(z)&=\left(1-\frac{\al_{j+1}z_{j}z_{j+1}}{\al_{j}}\right)
\left(1-\frac{\al_{j+1}z_{j}}{\al_{j}z_{j+1}}\right)\label{4.2c}\\
B_j^{1,1}(z)&=\left(1-\frac{\al_{j+1}z_{j}z_{j+1}}{\al_{j}}\right)
\left(1-\frac{q\al_{j+1}z_{j+1}z_{j}}{\al_{j}}\right)
.\label{4.2d}
\end{align}
In the above formulas $x_j$ is related to $z_j$ by \eqref{2.5}, with the 
convention $z_0=\al_0$ and $z_{d+1}=\al_{d+2}$. We extend the definition 
of $B_{j}^{k,l}(z)$ for $(k,l)\in\{-1,0,1\}^2$ by defining 
\begin{align}
B_j^{-1,l}(z)&=I_j(B_j^{1,l}(z)) \text{ for }l=0,1 \label{4.2e}\\
B_j^{k,-1}(z)&=I_{j+1}(B_j^{k,1}(z)) \text{ for }k=0,1 \label{4.2f}\\
B_j^{-1,-1}(z)&=I_{j}(I_{j+1}(B_j^{1,1}(z))).\label{4.2g}
\end{align}
\end{subequations}
Next, for $j\in\{1,\dots,d\}$ we denote
\begin{subequations}\label{4.3}
\begin{align}
b_j^{0}(z)&=(1-qz_{j}^2)(1-qz_{j}^{-2})\label{4.3a}\\
b_j^{1}(z)&=(1-z_{j}^2)(1-qz_{j}^2)\label{4.3b}\\
b_j^{-1}(z)&=I_j(b_j^{1}(z)).\label{4.3c}
\end{align}
\end{subequations}
Finally, for $\nu\in\{-1,0,1\}^d$ we put
\begin{equation}\label{4.4}
C_{\nu}(z)=\left(q(q+1)\right)^{d-|{\nu}^{+}|-|\nu^{-}|}\,
\frac{\prod_{k=0}^{d}B_k^{{\nu}_k,{\nu}_{k+1}}(z)}
{\prod_{k=1}^{d}b_k^{{\nu}_k}(z)},
\end{equation}
with the convention ${\nu}_0={\nu}_{d+1}=0$.

\begin{Proposition}\label{pr4.2}
The operator $\cL_d=\cL_d(z;\al)$ defined by \eqref{3.5}-\eqref{3.6} can be 
written as
\begin{equation}\label{4.5}
\cL_d(z;\al)=\sum_{\nu\in\{-1,0,1\}^d}C_{\nu}(z)\E{z}^{\nu}
-\left(1+\frac{\al_{d+1}^2}{q\al_{0}^2}
-\frac{4\al_{d+1}x_0x_{d+1}}{(q+1)\al_{0}}\right),
\end{equation}
where $C_{\nu}(z)$ are given by \eqref{4.2}, \eqref{4.3} and \eqref{4.4}.
\end{Proposition}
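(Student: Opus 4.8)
The plan is to prove that the two expressions for $\cL_d$ agree by comparing them coefficientwise. Since the shift operators $\{\E{z}^{\mu}:\mu\in\Z^d\}$ are linearly independent over the field of rational functions in $z$, it suffices to check, for each $\mu\in\{-1,0,1\}^d$, that the coefficient of $\E{z}^{\mu}$ in \eqref{3.6} equals the coefficient of $\E{z}^{\mu}$ on the right-hand side of \eqref{4.5}. First I would expand the forward and backward differences in \eqref{3.6} into pure shifts by means of $\fs{z_j}=\E{z_j}-1$ and $\bs{z_j}=1-\E{z_j}^{-1}$. Collecting terms, a given summand $(-1)^{|\nu^{-}|}A_{\nu}\fs{z}^{\nu^{+}}\bs{z}^{\nu^{-}}$ contributes to $\E{z}^{\mu}$ precisely when $\nu_j=\mu_j$ at every $j$ with $\mu_j\neq0$, while at the coordinates with $\mu_j=0$ the index $\nu_j$ may be any of $-1,0,1$, the identity part of the corresponding difference being selected. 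A bookkeeping of the signs produced by $\fs{z_j}$, $\bs{z_j}$ and the prefactor $(-1)^{|\nu^{-}|}$ collapses to a single factor $(-1)^{\#\{j:\mu_j=0,\ \nu_j\neq0\}}$, so that the coefficient of $\E{z}^{\mu}$ in \eqref{3.6} equals
\begin{equation*}
\sum_{\nu}(-1)^{\#\{j:\,\mu_j=0,\ \nu_j\neq0\}}\,A_{\nu},
\end{equation*}
the sum running over all $\nu\in\{-1,0,1\}^d\setminus\{0\}^d$ that agree with $\mu$ on its support. It then remains to identify this signed sum with $C_{\mu}$ when $\mu\neq0$, and with $C_0$ minus the scalar in \eqref{4.5} when $\mu=0$.

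The essential structural fact is that $A_{\nu}$, as given by \eqref{3.5}, is a product of two-site ``bond'' factors attached to the consecutive elements of $\{0\}\cup\mathrm{supp}(\nu)\cup\{d+1\}$ (with $z_0=\al_0$, $z_{d+1}=\al_{d+2}$), divided by the one-site factors $(1-z_j^2)(1-qz_j^2)$ at the occupied coordinates (those with $\nu_j\neq0$); moreover the internal bond joining two occupied sites $p<n$ depends only on $\al_p,\al_n,z_p,z_n$ and is insensitive to the sites skipped between them. The target coefficient $C_{\mu}$ of \eqref{4.4}, by contrast, is a nearest-neighbour product with a factor $B_k^{\mu_k,\mu_{k+1}}$ for every bond $(k,k+1)$. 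Reconciling these two shapes is exactly the content of the signed resummation: summing over whether each $\mu$-empty site is occupied by $\nu$ (and over the sign $\nu_j=\pm1$ when it is) must turn the skip-bond products into the full nearest-neighbour product, the two terms $\nu_j=+1$ and $\nu_j=-1$ being exchanged by $I_j$ and therefore combining into the $I_j$-invariant factor $B_j^{0,0}$ of \eqref{4.2a}, which depends on $z_j$ only through $x_j$.

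I expect this resummation to be the main obstacle, and I would establish it by induction on $d$, peeling off the last variable $z_d$ in the spirit of the proof of \thref{th2.1}. The three values of $\nu_d$ are treated as follows: for $\nu_d=0$ the site $d$ is deleted and one recovers the $(d-1)$-variable identity for the parameters $\al_0,\dots,\al_{d-1},\al_{d+1},\al_{d+2}$, where the inductive hypothesis applies; for $\nu_d=\pm1$ the site $d$ plays the role of the new right boundary, and after summing the two signs it supplies the right-boundary bond together with the one-site factor $b_d^0$. Collecting the three contributions reduces the inductive step to a rational-function identity in $z_d$ which, after clearing denominators, reproduces precisely the factors $B_{d-1}^{\cdot,\cdot}$, $B_d^{\cdot,0}$ and $b_d^0$ of \eqref{4.2}--\eqref{4.3} and the extra power of $q(q+1)$; the special boundary factors in \eqref{3.5a} are exactly those needed for the specializations $z_0=\al_0$ and $z_{d+1}=\al_{d+2}$ in \eqref{4.2} to come out right.

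Finally I would dispose of the diagonal coefficient. Running the same resummation for $\mu=0$ over the single range stretching from the boundary $0$ to the boundary $d+1$ produces $C_0$ provided the empty configuration $\nu=0$ is included; its contribution is the direct boundary bond from $0$ to $d+1$, which equals the scalar $1+\al_{d+1}^2/(q\al_0^2)-4\al_{d+1}x_0x_{d+1}/((q+1)\al_0)$ appearing in \eqref{4.5}. As this term is absent from the sum in \eqref{3.6}, comparison of the coefficients of $\Id$ yields precisely the stated constant correction, and \eqref{3.6} and \eqref{4.5} then agree coefficientwise.
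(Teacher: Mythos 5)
Your proposal is correct and takes essentially the same route as the paper: your induction peeling off the last variable is precisely the paper's decomposition \eqref{4.6} of $\cL_d$ into $\cL'$, $\cL''$, $\cL'''$, your three contributions at $\nu_d\in\{-1,0,1\}$ correspond to its Cases 1 and 2, and your identification of the empty configuration's ``direct boundary bond'' from $0$ to $d+1$ with the subtracted scalar matches how the constant emerges from \eqref{4.7c}. The one step you leave implicit---that the occupied-last-site branch is literally an instance of the $(d-1)$-variable identity---is realized in the paper by the parameter substitution $\al_d'=\al_d z_d q^{1/2}$, $\al_{d+1}'=q^{-1/2}$, under which the internal bond to site $d$ becomes the right-boundary factor, so there is no gap.
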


\begin{proof} 
We shall prove the statement by induction on $d$. For $d=1$ 
formulas \eqref{3.5} give
\begin{align*}
&A_{e_1}=\frac{(1-\al_1z_1)\left(1-\frac{\al_1z_1}{\al_0^2}\right)
\left(1-\frac{\al_2\al_3z_1}{\al_1}\right)
\left(1-\frac{\al_2z_1}{\al_1\al_3}\right)}
{(1-z_1^2)(1-qz_1^2)}\\
&A_{-e_1}=I_1(A_{e_1})
=\frac{(z_1-\al_1)\left(z_1-\frac{\al_1}{\al_0^2}\right)
\left(z_1-\frac{\al_2\al_3}{\al_1}\right)
\left(z_1-\frac{\al_2}{\al_1\al_3}\right)}
{(1-z_1^2)(q-z_1^2)}.
\end{align*}
Using \eqref{4.2}, \eqref{4.3} and \eqref{4.4} we get
\begin{align*}
&C_{e_1}=\frac{B_0^{0,1}(z)B_1^{1,0}(z)}{b_1^1(z)}=
\frac{(1-\al_1z_1)\left(1-\frac{\al_1z_1}{\al_0^2}\right)
\left(1-\frac{\al_2\al_3z_1}{\al_1}\right)
\left(1-\frac{\al_2z_1}{\al_1\al_3}\right)}
{(1-z_1^2)(1-qz_1^2)}
=A_{e_1}\\
&C_{-e_1}=I_{1}(C_{e_1}(z))=A_{-e_1} \\
&C_{0}=q(q+1)\frac{B_0^{0,0}(z)B_1^{0,0}(z)}{b_1^0(z)}\\
&\quad\;=q(q+1)
\frac{\left(1+\frac{\al_1^2}{q\al_0^2}-\frac{4\al_1}{(q+1)\al_0}x_0x_1\right)
\left(1+\frac{\al_2^2}{q\al_1^2}-\frac{4\al_2}{(q+1)\al_1}x_1x_2\right)}
{(1-qz_1^2)(1-qz_1^{-2})},
\end{align*}
where $x_0=\frac{1}{2}(\al_0+\al_0^{-1})$, $x_1=\frac{1}{2}(z_1+z_1^{-1})$ 
and $x_2=\frac{1}{2}(\al_{3}+\al_{3}^{-1})$.
We need to check that
\begin{align*}
\cL_1&=A_{e_1}(\E{z_1}-1)-A_{-e_1}(1-\E{z_1}^{-1})\\
&=C_{e_1}\E{z_1}+C_{-e_1}\E{z_1}^{-1}
+C_0-\left(1+\frac{\al_2^2}{q\al_0^2}-\frac{4\al_2x_0x_2}{(q+1)\al_0}\right),
\end{align*}
which amounts to checking that
$$-A_{e_1}-A_{-e_1}
=C_0-\left(1+\frac{\al_2^2}{q\al_0^2}-\frac{4\al_2x_0x_2}{(q+1)\al_0}\right).$$
This equality can be verified by a straightforward computation using 
the explicit formulas for $A_{e_1}$, $A_{-e_1}$ and $C_0$ above.\\

Let now $d>1$ and assume that the statement is true for $d-1$. 
We can write $\cL_d$ as follows
\begin{equation}\label{4.6}
\begin{split}
\cL_{d}
=&\cL'\;\frac{\left(1-\frac{\al_{d+1}\al_{d+2}z_d}{\al_{d}}\right)
\left(1-\frac{\al_{d+1}z_d}{\al_{d}\al_{d+2}}\right)}
{(1-z_d^2)(1-qz_d^2)}\fs{z_d} \\
&+\cL''\;\frac{\left(1-\frac{\al_{d+1}\al_{d+2}}{\al_{d}z_d}\right)
\left(1-\frac{\al_{d+1}}{\al_{d}\al_{d+2}z_d}\right)}
{(1-z_d^{-2})(1-qz_d^{-2})}
(-\bs{z_d})+\cL''',
\end{split}
\end{equation}
where $\cL'$, $\cL''$, $\cL'''$ are $q$-difference operators in the variables 
$z_1,z_2,\dots,z_{d-1}$ with coefficients depending on $z_1,\dots,z_{d}$ and 
the parameters $\al_0,\al_1,\dots,\al_{d+2}$.
Clearly, the operators $\cL'$, $\cL''$, $\cL'''$ are uniquely determined 
from $\cL_d$. This implies that they are $I_1,I_2,\dots,I_{d-1}$ invariant 
and therefore they are characterized by the coefficients of 
$\fs{\bar{z}}^{\bar{\nu}}=\fs{z_1}^{\nu_1}\fs{z_2}^{\nu_2}\cdots
\fs{z_{d-1}}^{\nu_{d-1}}$, where $\bar{z}=(z_1,z_2,\dots,z_{d-1})$ and 
$\bar{\nu}=(\nu_1,\nu_2,\dots,\nu_{d-1})\in\{0,1\}^{d-1}$. The coefficient of 
$\fs{\bar{z}}^{\bar{0}}$ in $\cL'$ is equal to 
$(1-\al_dz_d)(1-\al_dz_d/\al_0^2)$ 
(it comes from the term $A_{e_d}\fs{z_d}$ in $\cL_d$). 
Using equations \eqref{3.5} one can see that the coefficients 
of $\fs{\bar{z}}^{\bar{\nu}}$ for 
$\bar{\nu}\in\{0,1\}^{d-1}\setminus\{\bar{0}\}$
are the same as the coefficients of the operator 
$\cL_{d-1}(z_1,\dots,z_{d-1};\al_0,\dots,\al_{d-1},\al_{d}',\al_{d+1}')$ where 
$\al_{d}'=\al_{d}z_dq^{1/2}$ and $\al_{d+1}'=q^{-1/2}$. Notice that
$$1+\frac{(\al_{d}')^2}{q\al_{0}^2}
-\frac{4\al'_{d}x_0x'_{d}}{(q+1)\al_{0}}
=(1-\al_dz_d)(1-\al_dz_d/\al_0^2).$$
Thus, using the induction hypothesis, we deduce that
\begin{subequations}\label{4.7}
\begin{equation}\label{4.7a}
\begin{split}
\cL'=
&\cL_{d-1}(z_1,\dots,z_{d-1};\al_0,\dots,\al_{d-1},\al_{d}z_dq^{1/2},
q^{-1/2})\\
&\qquad\qquad+(1-\al_dz_d)(1-\al_dz_d/\al_0^2)\\
=&\sum_{\nu\in\{-1,0,1\}^{d-1}}C'_{\nu}\E{\bar{z}}^{\nu},
\end{split}
\end{equation}
where $C'_{\nu}$ are computed from \eqref{4.2}-\eqref{4.4} for the operator 
$\cL_{d-1}$ with parameters given in \eqref{4.7a}. For $\cL''$ we have 
\begin{equation}\label{4.7b}
\cL''=I_d(\cL')=\sum_{\nu\in\{-1,0,1\}^{d-1}}I_d(C'_{\nu})\E{\bar{z}}^{\nu}=
\sum_{\nu\in\{-1,0,1\}^{d-1}}C''_{\nu}\E{\bar{z}}^{\nu}.
\end{equation}
For the last operator $\cL'''$ we obtain
\begin{equation}\label{4.7c}
\begin{split}
\cL'''&=
\cL_{d-1}(z_1,\dots,z_{d-1};\al_0,\dots,\al_{d-1},\al_{d+1},\al_{d+2})\\
&= \sum_{\nu\in\{-1,0,1\}^{d-1}}C'''_{\nu}\E{\bar{z}}^{\nu}
-\left(1+\frac{\al_{d+1}^2}{q\al_{0}^2}
-\frac{4\al_{d+1}x_0x_{d+1}}{(q+1)\al_{0}}\right),
\end{split}
\end{equation}
\end{subequations}
where $C'''_{\nu}$ are computed from \eqref{4.2}-\eqref{4.4} for the operator 
$\cL_{d-1}$ with parameters given in \eqref{4.7c}.\\
Notice that the last term in \eqref{4.7c} gives the last term in the 
right-hand side of \eqref{4.5}. Thus, it remains to show that we get the 
stated formulas \eqref{4.4} for the coefficients 
$C_{\nu}$ in \eqref{4.5}, using the decomposition \eqref{4.6}. 
Since the operator $\cL_{d}$ is $I$-invariant, it is enough to prove 
the formulas for $C_{\nu}$ when $\nu$ has nonnegative coordinates. We have 
two possible cases depending on whether $\nu_d=0$ or $\nu_d=1$.

{\it Case 1: $\nu_d=1$.\/} Write $\nu=(\nu',1)$ with 
$\nu'\in\{0,1\}^{d-1}$. From \eqref{4.6} it is clear that $\E{z}^{\nu}$ 
appears only in the first term on the right-hand side and we have
\begin{equation*}
C_{\nu}=C'_{\nu'}
\frac{\left(1-\frac{\al_{d+1}\al_{d+2}z_d}{\al_d}\right)
\left(1-\frac{\al_{d+1}z_d}{\al_d\al_{d+2}}\right)}
{(1-z_d^2)(1-qz_d^2)}.
\end{equation*}
Notice that the factors $(B_k^{{\nu}_k,{\nu}_{k+1}})'$ in formula
\eqref{4.4} for $C'_{\nu'}$ are the same as the factors 
$B_k^{{\nu}_k,{\nu}_{k+1}}$ in formula \eqref{4.4} for $C_{\nu}$ when 
$k=0,1,\dots,d-2$. Similarly, the factors $(b_k^{\nu_k})'$ in formula
\eqref{4.4} for $C'_{\nu'}$ are the same as the factors 
$b_k^{\nu_k}$ in formula \eqref{4.4} for $C_{\nu}$ when 
$k=1,\dots,d-1$. This combined with the last formula above and 
$$b_d^1=(1-z_d^2)(1-qz_d^2), \qquad 
B_d^{1,0}=\left(1-\frac{\al_{d+1}\al_{d+2}z_d}{\al_d}\right)
\left(1-\frac{\al_{d+1}z_d}{\al_d\al_{d+2}}\right)$$
show that in order to complete the proof we need to check that 
$(B_{d-1}^{{\nu}_{d-1},0})'=B_{d-1}^{{\nu}_{d-1},1}$. 
This can be easily verified from the defining relations \eqref{4.2} by 
considering the two possible cases $\nu_{d-1}=0$ and $\nu_{d-1}=1$.

{\it Case 2: $\nu_d=0$.\/} Let us write again $\nu=(\nu',0)$ with 
$\nu'\in\{0,1\}^{d-1}$. Then from \eqref{4.6} we deduce
\begin{align*}
C_{\nu}=& -C'_{\nu'}\frac{\left(1-\frac{\al_{d+1}\al_{d+2}z_d}{\al_d}\right)
\left(1-\frac{\al_{d+1}z_d}{\al_d\al_{d+2}}\right)}
{(1-z_d^2)(1-qz_d^2)}\\
&\qquad -C''_{\nu'}\frac{\left(1-\frac{\al_{d+1}\al_{d+2}}{\al_dz_d}\right)
\left(1-\frac{\al_{d+1}}{\al_d\al_{d+2}z_d}\right)}
{(1-z_d^{-2})(1-qz_d^{-2})}+C'''_{\nu'}.
\end{align*}
We need to check formula \eqref{4.4} for $C_{\nu}$. Again the factors 
$B_k^{\nu_k,\nu_{k+1}}$ for $k=0,1,\dots,d-2$ and the denominator factors 
$b_k^{\nu_k}$ for $k=1,2,\dots,d-1$ are common for $C_{\nu}$, $C'_{\nu'}$, 
$C''_{\nu'}$ and $C'''_{\nu'}$. Thus we need to verify that
\begin{align*}
&q(q+1)\frac{B_{d-1}^{\nu_{d-1},0}B_d^{0,0}}{(1-qz_d^2)(1-qz_d^{-2})}
= -(B_{d-1}^{\nu_{d-1},0})'
\frac{\left(1-\frac{\al_{d+1}\al_{d+2}z_d}{\al_d}\right)
\left(1-\frac{\al_{d+1}z_d}{\al_d\al_{d+2}}\right)}
{(1-z_d^2)(1-qz_d^2)}\\
&\qquad -(B_{d-1}^{\nu_{d-1},0})''
\frac{\left(1-\frac{\al_{d+1}\al_{d+2}}{\al_dz_d}\right)
\left(1-\frac{\al_{d+1}}{\al_d\al_{d+2}z_d}\right)}
{(1-z_d^{-2})(1-qz_d^{-2})}+(B_{d-1}^{\nu_{d-1},0})'''.
\end{align*}
Using the explicit formulas \eqref{4.2} and considering separately the two 
possible cases $\nu_{d-1}=0$ and $\nu_{d-1}=1$ one can check that the above 
equality holds, thus completing the proof.
\end{proof}

\begin{Proposition}\label{pr4.3}
The operator $\cL_d$ defined by \eqref{3.5}-\eqref{3.6} is self-adjoint 
with respect to the inner product \eqref{2.6}-\eqref{2.8}.
\end{Proposition}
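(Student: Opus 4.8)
The plan is to apply \leref{le4.1} directly to the expansion of $\cL_d$ furnished by \prref{pr4.2}. Writing
$$\cL_d=\sum_{\nu\in\{-1,0,1\}^d}C_\nu(z)\E{z}^\nu-\left(1+\frac{\al_{d+1}^2}{q\al_0^2}-\frac{4\al_{d+1}x_0x_{d+1}}{(q+1)\al_0}\right),$$
the subtracted term is a scalar (it depends only on the parameters $\al_0,\al_{d+2}$ through $x_0,x_{d+1}$), hence automatically self-adjoint for the inner product \eqref{4.1}. Thus it suffices to verify conditions (i) and (ii) of \leref{le4.1} for every coefficient $C_\nu(z)$, $\nu\in\{-1,0,1\}^d$. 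Moreover condition (i) for $\nu$ coincides with condition (i) for $-\nu$ (substitute $z\mapsto zq^\nu$), and the $I$-invariance of $\cL_d$ gives $I_j(C_\nu)=C_{I_j\nu}$, so it is enough to treat indices $\nu$ with nonnegative coordinates.

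First I would verify (i). Combining \eqref{2.6} with \eqref{4.2}--\eqref{4.4}, the product $C_\nu(z)w(z)$ factorizes as a symmetric power of $q(q+1)$ times a product of one-variable ``vertex'' factors and two-variable ``edge'' factors,
$$C_\nu(z)w(z)=\left(q(q+1)\right)^{d-|\nu^+|-|\nu^-|}\prod_{j=1}^d\frac{(z_j^2,z_j^{-2};q)_\infty}{b_j^{\nu_j}(z)}\prod_{k=0}^d\frac{B_k^{\nu_k,\nu_{k+1}}(z)}{\prod_{\ep_1,\ep_2\in\{-1,1\}}(\al_{k+1}\al_k^{-1}z_{k+1}^{\ep_1}z_k^{\ep_2};q)_\infty}.$$
Because the exponent $d-|\nu^+|-|\nu^-|$ is invariant under $\nu\mapsto-\nu$ and the shift $z\mapsto zq^\nu$ acts coordinatewise, condition (i) reduces to a local identity for each vertex factor, namely $\frac{(z_j^2,z_j^{-2};q)_\infty}{b_j^{\nu_j}(z)}=\frac{((z_jq^{\nu_j})^2,(z_jq^{\nu_j})^{-2};q)_\infty}{b_j^{-\nu_j}(z_jq^{\nu_j})}$, and an analogous identity for each edge factor. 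These are routine manipulations of $q$-shifted factorials via $(a;q)_\infty=(1-a)(aq;q)_\infty$; for instance, when $\nu_j=1$ one checks immediately that $(z_j^2q^2,z_j^{-2}q^{-2};q)_\infty/b_j^{-1}(z_jq)$ equals $(z_j^2,z_j^{-2};q)_\infty/b_j^{1}(z)$. Only finitely many sign patterns $(\nu_k,\nu_{k+1})\in\{-1,0,1\}^2$ need to be examined for the edge factors, and the defining formulas \eqref{4.2} for $B_k^{\nu_k,\nu_{k+1}}$ are arranged precisely so that each such identity holds.

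Next I would verify (ii), the holomorphicity of $C_\nu(z)w(z)$ on $\T^\nu_t$ for every $t\in[0,1]$. Since each $B_k^{\nu_k,\nu_{k+1}}(z)$ is a Laurent polynomial, poles can only come from the denominators $b_j^{\nu_j}(z)$ and from the infinite products in the denominator of $w$. The zeros of $b_j^{\nu_j}$ that actually fall on some circle $|z_j|=q^{-t\nu_j}$, $t\in[0,1]$, are cancelled exactly by the double zeros of the numerator factor $(z_j^2,z_j^{-2};q)_\infty$ in that annulus (for $\nu_j=0$ the annulus is the unit circle, where $b_j^0$ has no zero). For the denominator of $w$, the numerator $B_k^{\nu_k,\nu_{k+1}}$ cancels precisely the finitely many factors of $\prod_{\ep_1,\ep_2}(\al_{k+1}\al_k^{-1}z_{k+1}^{\ep_1}z_k^{\ep_2};q)_\infty$ that could vanish on the enlarged tori; the surviving arguments then have modulus $<1$ by the constraints \eqref{2.7} together with $|\al_{k+1}/\al_k|<1$ (the cancellation absorbs exactly the extra factor $q^{-t(|\nu_k|+|\nu_{k+1}|)}$ picked up off the torus), so they avoid the points $q^{-m}$, $m\ge 0$. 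Hence $C_\nu(z)w(z)$ is holomorphic on $\T^\nu_t$ throughout $[0,1]$, and \leref{le4.1} yields the self-adjointness of $\cL_d$.

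The hard part will be the edge bookkeeping common to both steps: matching the polynomial $B_k^{\nu_k,\nu_{k+1}}$ against the correct factors of the infinite product in $w$ for every sign pattern $(\nu_k,\nu_{k+1})$, and confirming that the pole--zero cancellations persist on the whole family of tori $\T^\nu_t$, $t\in[0,1]$, rather than merely on $\T$ itself.
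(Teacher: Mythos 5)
Your proposal is correct and follows essentially the same route as the paper: you apply \leref{le4.1} to the expansion from \prref{pr4.2}, factor $C_{\nu}(z)w(z)$ into per-variable pieces $w'_k/b_k^{\nu_k}$ and per-edge pieces $B_k^{\nu_k,\nu_{k+1}}/w''_k$, verify condition (i) through the same local identities (the paper's \eqref{4.9}), and verify condition (ii) by the same pole--zero cancellations on $\T^{\nu}_t$ using \eqref{2.7}. The only cosmetic difference is that you reduce to $\nu$ with nonnegative coordinates via the $I$-invariance of the coefficients and of $w$, whereas the paper reduces modulo $\nu\mapsto-\nu$ and checks the mixed edge pattern $(\nu_k,\nu_{k+1})=(1,-1)$ directly --- both amount to the same finite case analysis.
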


\begin{proof}
We shall use \prref{pr4.2} to check that the conditions in \leref{le4.1} 
are satisfied.
We can represent the weight $w(z)$ in \eqref{2.6} as 
$$w(z)=\frac{\prod_{k=1}^{d}w'_k(z)}{\prod_{k=0}^{d}w''_k(z)},$$
where 
\begin{subequations}\label{4.8}
\begin{align}
w'_k(z)&=(z_k^{2},z_k^{-2};q)_{\infty} \label{4.8a}\\
w''_k(z)&=\left(\frac{\al_{k+1}}{\al_k}z_{k+1}z_{k},
\frac{\al_{k+1}}{\al_k}\frac{z_{k+1}}{z_{k}},
\frac{\al_{k+1}}{\al_k}\frac{z_{k}}{z_{k+1}},
\frac{\al_{k+1}}{\al_k}\frac{1}{z_{k}z_{k+1}};q\right)_{\infty}.\label{4.8b}
\end{align}
\end{subequations}
Using \prref{pr4.2} and formula \eqref{4.4} for the coefficients 
of the operator $\cL_d(z;\al)$ we see that condition (i) of \leref{le4.1} 
will be satisfied if we can show that for every $\nu\in\{-1,0,1\}^{d}$ 
we have 
\begin{subequations}\label{4.9}
\begin{equation}\label{4.9a}
\frac{b_k^{\nu_{k}}(z)}{b_k^{-\nu_{k}}(zq^{\nu})}
=\frac{w'_k(z)}{w'_k(zq^{\nu})}
\end{equation}
and
\begin{equation}\label{4.9b}
\frac{B_k^{\nu_{k},\nu_{k+1}}(z)}{B_k^{-\nu_{k},-\nu_{k+1}}(zq^{\nu})}
=\frac{w''_k(z)}{w''_k(zq^{\nu})}.
\end{equation}
\end{subequations}
It is easy to see that if these formulas are true for some $\nu$ then they 
are also true for $-\nu$. Since $b_k^{\nu_{k}}(z)$ depends only on $z_k$, it 
is enough to check \eqref{4.9a} for $\nu_k=0$ and $\nu_k=1$. The case 
$\nu_k=0$ is trivial because clearly both sides of equation \eqref{4.9a} are 
equal to 1. If $\nu_k=1$, then the right-hand side of \eqref{4.9a} is 
$$\frac{w'_k(z)}{w'_k(zq^{\nu})}=
\frac{(z_k^2,z_k^{-2};q)_{\infty}}{(q^{2}z_k^2,z_k^{-2}q^{-2};q)_{\infty}}
=\frac{(1-z_k^2)(1-qz_k^2)}
{\left(1-\frac{1}{q^2z_k^2}\right)\left(1-\frac{1}{qz_k^2}\right)}.$$
Using \eqref{4.3b}-\eqref{4.3c} we see at once that the last expression equals 
$b_k^{1}(z)/b_k^{-1}(zq^{e_k})$ completing the proof of \eqref{4.9a}.
Similarly, $B_k^{\nu_{k},\nu_{k+1}}(z)$ depends only on $z_{k}$ and $z_{k+1}$. 
Thus it is enough to verify \eqref{4.9b} for 
$(\nu_{k},\nu_{k+1})=\{(1,0),(0,1), (1,1), (1,-1)\}$, which can be done in 
the same manner by a straightforward verification using formulas 
\eqref{4.2}.

Condition (ii) in \leref{le4.1} will follow if we show that for every 
$\nu\in\{-1,0,1\}^{d}$ the functions
\begin{subequations}\label{4.10}
\begin{equation}\label{4.10a}
\frac{w_k'(z)}{b_k^{\nu_{k}}(z)}\quad\text{ for}\quad 
k=1,2,\dots,d
\end{equation}
and 
\begin{equation}\label{4.10b}
\frac{B_k^{\nu_{k},\nu_{k+1}}(z)}{w_k''(z)}\quad\text{ for}\quad 
k=0,1,2,\dots,d
\end{equation}
\end{subequations}
are holomorphic on $\T^{\nu}_{t}$ for $t\in[0,1]$. Notice that 
$$w_k'(z)=(1-z_k^{2})(1-qz_k^{2})(1-z_k^{-2})(1-qz_k^{-2})
(z_k^2q^2,z_k^{-2}q^2;q)_{\infty},$$
which combined with \eqref{4.3} shows that all the factors in 
the denominator in \eqref{4.10a} cancel leading to 
a holomorphic function on $\T^{\nu}_{t}$. Next we show that function in 
\eqref{4.10b} is  holomorphic on $\T^{\nu}_{t}$. From 
\eqref{4.9b} it follows that if this is true for $\nu$, then it will also be 
true for $-\nu$. Thus we can consider only the cases: 
$(\nu_{k},\nu_{k+1})=\{(0,0),(1,0),(0,1), (1,1), (1,-1)\}$. The case 
$(\nu_{k},\nu_{k+1})=(0,0)$ is trivial since equations \eqref{2.7} 
guarantee that $w_k''(z)$ has no zeros on the the $d$-dimensional torus 
$\T=\{z\in\C^d:|z_j|=1\text{ for }j=1,2,\dots,d\}$. Let us consider for 
instance the case $(\nu_{k},\nu_{k+1})=(1,1)$. Then for $z\in \T^{\nu}_{t}$ we 
have $1\leq |z_{k}z_{k+1}|\leq q^{-2}$ and $|z_{k}/z_{k+1}|=1$.
From \eqref{2.7} and \eqref{4.8b} it follows that the only factors in 
$w_k''(z)$ that can vanish during the homotopy are 
$$\left(1-\frac{\al_{k+1}z_{k}z_{k+1}}{\al_{k}}\right)
\left(1-\frac{q\al_{k+1}z_{k+1}z_{k}}{\al_{k}}\right).$$
From \eqref{4.2d} we see that these two factors cancel with $B_k^{1,1}$ 
showing that the function in \eqref{4.10b} is holomorphic on $\T^{\nu}_{t}$. 
The remaining three cases follow along the same lines.
\end{proof}

As an immediate corollary of \prref{pr3.3} and \prref{pr4.3} we obtain 
the following theorem.

\begin{Theorem}\label{th4.4}
Let $\al_0,\al_1,\dots,\al_{d+2}$ be real parameters satisfying \eqref{2.7} 
and let $\{Q(n;x;\al):n\in\N_0^{d}\}$ be a set of orthogonal polynomials 
with respect to the inner product \eqref{2.8}, i.e. $Q(n;x;\al)$ is 
a polynomial of total degree $|n|$, orthogonal to all polynomials 
of degree at most $|n|-1$. Then $Q(n;x;\al)$ is an eigenfunction of the 
operator $\cL_d$ defined by \eqref{3.5}-\eqref{3.6} with eigenvalue
$\mu=-(1-q^{-|n|})\left(1-\frac{\al_{d+1}^2}{\al_0^2}q^{|n|-1}\right)$.
\end{Theorem}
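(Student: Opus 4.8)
The plan is to deduce the result directly from the two structural facts already established: the triangularity of $\cL_d$ (\prref{pr3.3}) and its self-adjointness with respect to the inner product (\prref{pr4.3}). The underlying principle is the familiar one that a self-adjoint operator which is triangular with respect to a filtration by degree must act diagonally on any orthogonal basis compatible with that filtration; here the filtration is $\cPx^0\subset\cPx^1\subset\cdots$ by total degree.

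First I would fix $n\in\N_0^d$, set $k=|n|$, and record that $Q(n;x;\al)$ has total degree exactly $k$, so $Q(n;x;\al)\in\cPx^{k}$. Applying \prref{pr3.3} gives
$$\cL_d(Q(n;x;\al))=\mu\, Q(n;x;\al)+r,$$
where $\mu=-(1-q^{-k})\left(1-\frac{\al_{d+1}^2}{\al_0^2}q^{k-1}\right)$ is precisely the stated eigenvalue and the remainder $r$ lies in $\cPx^{k-1}$.

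Next I would pair this identity against an arbitrary $p\in\cPx^{k-1}$ and invoke self-adjointness (\prref{pr4.3}):
$$\langle \cL_d(Q(n;x;\al)),p\rangle=\langle Q(n;x;\al),\cL_d(p)\rangle.$$
Since $\cL_d$ is triangular it preserves the degree filtration, so $\cL_d(p)\in\cPx^{k-1}$; as $Q(n;x;\al)$ is orthogonal to every polynomial of degree at most $k-1$, the right-hand side vanishes. Substituting $\cL_d(Q(n;x;\al))=\mu\, Q(n;x;\al)+r$ on the left and again using that $Q(n;x;\al)$ is orthogonal to $p$, I conclude $\langle r,p\rangle=0$ for every $p\in\cPx^{k-1}$.

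Finally, since $r$ itself belongs to $\cPx^{k-1}$, choosing $p=r$ yields $\langle r,r\rangle=0$. Under the constraints \eqref{2.7} the weight $w(z)$ is positive, so the inner product is positive definite on $\cPx$ (this is exactly what makes the orthogonal polynomials of \thref{th2.1} well defined), forcing $r=0$ and hence $\cL_d(Q(n;x;\al))=\mu\, Q(n;x;\al)$. I do not expect a genuine obstacle: the argument is a formal consequence of combining \prref{pr3.3} and \prref{pr4.3}, and the only point that merits explicit mention is the positive-definiteness used in the last step, which is guaranteed by \eqref{2.7}.
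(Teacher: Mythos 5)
Your proof is correct and is exactly the argument the paper intends: Theorem \ref{th4.4} is stated there as an immediate corollary of Propositions \ref{pr3.3} and \ref{pr4.3}, and your triangularity-plus-self-adjointness-plus-positivity argument simply fills in those details. One microscopic point worth noting is that $\langle\cdot,\cdot\rangle$ as defined in \eqref{2.8b} is bilinear rather than sesquilinear, so if $r$ has complex coefficients the step $\langle r,r\rangle=0\Rightarrow r=0$ should be justified by splitting $r$ into real and imaginary parts (each orthogonal to all of $\cPx^{k-1}$) or by noting that the form is nondegenerate on $\cPx^{k-1}$ since its Gram matrix in a real monomial basis is positive definite.
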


\subsection{Construction of $\cAz$}\label{ss4.2}
Next we focus on the polynomials $P_d(n;x;\al)$ defined by 
\eqref{2.1} and \eqref{2.9}. 

\begin{Proposition}\label{pr4.5}
Let $\al\in(\C^*)^{d+3}$.
For $j\in\{1,2,\dots,d\}$ define 
\begin{subequations}\label{4.11}
\begin{align}
\fLz_j&=\cL_j(z_1,\dots,z_j;\al_0,\dots,\al_{j+1},z_{j+1})\label{4.11a}\\
\mu_j&=-(1-q^{-(n_1+n_2+\cdots+n_j)})\left(1-\frac{\al_{j+1}^2}{\al_0^2}
q^{n_1+n_2+\cdots+n_j-1}\right).\label{4.11b}
\end{align}
\end{subequations}
Then the polynomials $P_d(n;x;\al)$ defined by \eqref{2.1} and \eqref{2.9}
satisfy the spectral equations
\begin{equation}\label{4.12}
\fLz_j P_d(n;x;\al) = \mu_j P_d(n;x;\al)
\end{equation}
for every $j\in\{1,2,\dots, d\}$ and the operators $\fLz_j$ commute with each 
other, i.e. $\cAz=\C[\fLz_1,\fLz_2,\dots,\fLz_d]$ is a commutative 
subalgebra of $\cDz$.
\end{Proposition}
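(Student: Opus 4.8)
The plan is to prove the spectral equation \eqref{4.12} first, and then deduce commutativity as a formal consequence of the joint diagonalization. The key observation is that the operator $\fLz_j$ defined in \eqref{4.11a} is obtained from $\cL_j$ by substituting $z_{j+1}$ for the parameter $\al_{j+2}$; thus it is a $q$-difference operator acting only on the variables $z_1,\dots,z_j$, while $z_{j+1},\dots,z_d$ play the role of parameters. First I would examine how $\fLz_j$ acts on the factored form \eqref{2.9} of $P_d(n;x;\al)$. Because $\fLz_j$ involves only $z_1,\dots,z_j$, it annihilates the dependence on $z_{j+2},\dots,z_d$, and the only factors of the product $\prod_{k=1}^d p_{n_k}$ that are affected are the first $j$ of them, namely $\prod_{k=1}^{j} p_{n_k}$. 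The crucial point is that this truncated product is itself, up to the parameter substitution, a copy of the lower-dimensional polynomial $P_j$; more precisely, comparing \eqref{2.9} for dimension $j$ with the first $j$ factors for dimension $d$, one sees that
\[
\prod_{k=1}^{j} p_{n_k}=P_j\bigl((n_1,\dots,n_j);(x_1,\dots,x_j);(\al_0,\dots,\al_{j+1},z_{j+1})\bigr),
\]
with the free parameter $\al_{j+2}$ in the $j$-variable family replaced by the variable $z_{j+1}$.

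Next I would invoke \thref{th4.4}, which asserts that any family of polynomials orthogonal (with respect to the appropriate $j$-dimensional Askey-Wilson inner product) is an eigenfunction of $\cL_j$ with the eigenvalue displayed there. The natural route is therefore to verify that, for the specialized parameters $(\al_0,\dots,\al_{j+1},z_{j+1})$, the truncated product $\prod_{k=1}^j p_{n_k}$ is indeed orthogonal with respect to the $j$-dimensional measure, so that \thref{th4.4} applies and gives
\[
\fLz_j\Bigl(\prod_{k=1}^{j}p_{n_k}\Bigr)=\mu_j\prod_{k=1}^{j}p_{n_k},
\]
with $\mu_j$ matching \eqref{4.11b} after setting $|n|\to n_1+\cdots+n_j$ and $\al_{d+1}\to\al_{j+1}$. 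One should note, however, that \thref{th4.4} is stated for real parameters satisfying \eqref{2.7}, whereas here $z_{j+1}$ is a variable; the cleanest way around this is to observe that the eigenvalue equation in \prref{pr3.3} is a purely algebraic identity in the parameters (the triangularity and the leading eigenvalue were computed formally), so that the spectral equation $\cL_j(Q)=\mu Q$ extends by analytic continuation, or by polynomial identity, to arbitrary $\al\in(\C^*)^{d+3}$ and to $z_{j+1}$ treated as an indeterminate. Multiplying back by the unaffected factors $\prod_{k=j+1}^d p_{n_k}$, which $\fLz_j$ treats as scalars, yields \eqref{4.12} for the full polynomial $P_d(n;x;\al)$.

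The hard part will be justifying cleanly that \thref{th4.4} (or the underlying triangularity statement) transfers to the situation where $z_{j+1}$ appears as a variable rather than an admissible real parameter, since the orthogonality argument behind \thref{th4.4} genuinely uses the positivity and convergence conditions \eqref{2.7}. The most robust resolution is to appeal directly to triangularity: by \prref{pr3.3}, $\fLz_j$ maps $\cPx^{k}$ (in the variables $x_1,\dots,x_j$) into itself with leading eigenvalue depending only on the total degree; since the truncated product $\prod_{k=1}^j p_{n_k}$ is a polynomial of total degree $N_j=n_1+\cdots+n_j$ in $x_1,\dots,x_j$ whose eigenvalue under $\cL_j$ is forced by orthogonality for generic parameters, the identity $\fLz_j(\prod_{k=1}^j p_{n_k})=\mu_j\prod_{k=1}^j p_{n_k}$ is an equality of rational functions in the parameters that holds on a nonempty open set and hence identically.

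Finally I would deduce commutativity. Since each $\fLz_j$ has $P_d(n;x;\al)$ as an eigenfunction with eigenvalue $\mu_j$ for every $n\in\N_0^d$, for any two indices $i,j$ the commutator $[\fLz_i,\fLz_j]$ annihilates every $P_d(n;x;\al)$:
\[
[\fLz_i,\fLz_j]\,P_d(n;x;\al)=(\mu_i\mu_j-\mu_j\mu_i)\,P_d(n;x;\al)=0.
\]
Because the polynomials $\{P_d(n;x;\al):n\in\N_0^d\}$ form a basis of $\cPx$ (they are a triangular, hence linearly independent, family indexed by all of $\N_0^d$, spanning by a degree count), any operator in $\cDz$ preserving $\cPx$ and killing all of them must be the zero operator on $\cPx$; as $[\fLz_i,\fLz_j]$ is again an $I$-invariant $q$-difference operator preserving $\cPx$ by \leref{le3.2}, it vanishes identically. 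This establishes $\fLz_i\fLz_j=\fLz_j\fLz_i$, so $\cAz=\C[\fLz_1,\dots,\fLz_d]$ is commutative, completing the proof.
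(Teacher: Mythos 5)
Your treatment of the spectral equations \eqref{4.12} is essentially the paper's own argument: identify the first $j$ factors of \eqref{2.9} with $P_j(n_1,\dots,n_j;x_1,\dots,x_j;\al_0,\dots,\al_{j+1},z_{j+1})$, apply \thref{th4.4} for real parameters satisfying \eqref{2.7}, note that the remaining factors $\prod_{l=j+1}^{d}p_{n_l}$ are constants for $\fLz_j$, and extend to arbitrary $\al\in(\C^*)^{d+3}$ and to $z_{j+1}$ treated as an indeterminate by rationality of both sides in the parameters. That half is correct and matches the paper.

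The commutativity half has a genuine gap. From \eqref{4.12} you correctly get that $L=[\fLz_i,\fLz_j]$ annihilates every $P_d(n;x;\al)$, hence vanishes as an operator on $\cPx$ (granting, as the paper also implicitly does, that these polynomials span $\cPx$ for generic parameters). But the proposition asserts commutativity in $\cDz$, i.e.\ that all coefficients of $L$ vanish identically, and your inference ``zero operator on $\cPx$, therefore vanishes identically'' is precisely the nontrivial point: you give no argument for it, and your appeal to \leref{le3.2} addresses only preservation of $\cPx$ (which in any case already follows from each $\fLz_j$ preserving $\cPx$), not injectivity of the action of $\cDz$ on $\cPx$. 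An element of $\cDz$ has rational coefficients and shifts in both directions, and it acts here on polynomials in $x$, not in $z$; one must genuinely rule out nonzero such operators killing all of $\cPx$. The paper devotes roughly half of its proof to exactly this step: it multiplies $L$ by $\E{z}^{k}$ with $k$ large so that only nonnegative shifts occur, expands the resulting operator $L'$ in the operators $\DD{z_j}=z_j^{-1}(1-\E{z_j})$, and shows that the homogeneous linear system $L'(x^n)=0$ for $|n|\le M$ forces all coefficients $l_{\nu}(z)$ to vanish, because the evaluation matrix $\left(\DD{z}^{\nu}x^{n}\right)$ has leading-coefficient matrix $(g_{\nu,n})$ which is triangular with nonzero diagonal entries, hence nonvanishing determinant. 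Without this (or an equivalent Vandermonde-type) argument you obtain only that the $\fLz_j$ commute as endomorphisms of $\cPx$, which is strictly weaker than the stated claim that $\cAz=\C[\fLz_1,\dots,\fLz_d]$ is a commutative subalgebra of $\cDz$.
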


\begin{proof} 
If $\al_j$ are real satisfying \eqref{2.7}, then 
\eqref{4.12} for $j=d$ follows immediately from \thref{th4.4}. Notice that 
for fixed $n\in\N_0^d$ both sides of \eqref{4.12} depend rationally on the 
parameters $\al_j$. Thus when $j=d$ equation \eqref{4.12} holds 
for arbitrary values of the parameters $\al_j$. From \eqref{2.9} 
it clear that for $j<d$ the product of the first $j$ terms on the right-hand 
side is precisely 
$$P_j(n_1,\dots,n_j;x_1,\dots,x_j;\al_0,\dots,\al_{j+1},z_{j+1}),$$ 
while the remaining terms $\prod_{l=j+1}^{d}p_{n_l}$ do not depend on the 
variables $x_1,\dots,x_j$.
This shows that \eqref{4.12} holds also for $j<d$.

It remains to prove that the operators $\fLz_j$ commute with each other. 
Fix $i\neq j\in\{1,2,\dots, d\}$. 
From \eqref{4.12} it follows that
$$[\fLz_i,\fLz_j] P_d(n;x;\al)=0, \text{ for all }n\in\N_0^d.$$
This means that the operator $L=[\fLz_i,\fLz_j]\in\cDz$ vanishes on $\cPx$ and 
we want to show that it is identically equal to zero, i.e. its coefficients 
are identically equal to zero. Equivalently, it is enough to show that  
for fixed $k=(k_1,\dots,k_d)\in\N_0^d$ the operator $L'=\E{z}^{k}L$ (which 
also vanishes on $\cPx$) is identically equal to zero. We can assume that 
the components $k_j$ are large enough, and therefore the operator $L'$ will 
contain only nonnegative powers of $\E{z}$. If we denote
\begin{equation*}
\DD{z_j}=\frac{1}{z_j}(1-\E{z_j})
\end{equation*}
then the operator $L'$ can be uniquely written as 
\begin{equation*}
L'=\sum_{\begin{subarray}{c}\nu\in\N_0^d\\ |\nu|\leq M \end{subarray}}
l_{\nu}(z)\DD{z}^{\nu},
\end{equation*}
where $M\in\N$ is large enough and $l_{\nu}(z)$ rational functions of $z$.
Now we use the fact that $L'(x^n)=0$ for all $n\in\N_0^d$ such that 
$|n|\leq M$. We obtain a homogeneous linear system for $l_{\nu}(z)$ with
determinant 
$$\mathrm{Det}=\det_{\begin{subarray}{c}\nu,n\in\N_0^d\\
|\nu|,|n|\leq M\end{subarray}}(\DD{z}^{\nu}x^n).$$
In the above determinant we order in the same way $\nu$ (the rows) and $n$ 
(the columns) respecting the total degree, i.e. $\nu<\mu$ when 
$|\nu|<|\mu|$. It remains to show that $\mathrm{Det}$ is not identically 
equal to $0$. Notice that 
\begin{equation*}
\DD{z_j}^{\nu_j}x_j^{n_j}=\frac{1}{2^{n_j}}(q^{n_j-\nu_j+1};q)_{\nu_j}
z^{n_j-\nu_j}+O(z^{n_j-\nu_j-1})
\end{equation*}
and therefore if we put $$g_{\nu,n}=\frac{\prod_{j=1}^d
(q^{n_j-\nu_j+1};q)_{\nu_j}}{2^{|n|}}$$
then 
\begin{equation*}
\DD{z}^{\nu}x^n=g_{\nu,n}z^{n-\nu}+
\text{ terms involving  $z^k$ with }|k|<|n|-|\nu|.
\end{equation*}
From this formula it follows that
$$\mathrm{Det}=\det_{\begin{subarray}{c}\nu,n\in\N_0^d\\
|\nu|,|n|\leq M\end{subarray}}(g_{\nu,n})+\text{ a linear combination of 
$z^k$ with }|k|<0.$$
Clearly, $g_{\nu,n}\neq 0$ if and only if $\nu_j\leq n_j$ for all $j$.
This shows that matrix $(g_{\nu,n})$ is upper triangular with nonzero entries 
on the main diagonal, hence $\det_{\nu,n}(g_{\nu,n})\neq0$ completing the 
proof.
\end{proof}

\section{Bispectrality}\label{se5}

\subsection{Duality of the multivariable Askey-Wilson polynomials}\label{ss5.1}

Since $q^{n_j}$ determines $n_j$ uniquely modulo $\frac{2\pi i}{\log(q)}\,\Z$, 
we consider in this section complex variables
$n=(n_1,\dots,n_d)\in\left(\C\mod \frac{2\pi i}{\log(q)}\,\Z\right)^d$,
complex variables $z=(z_1,z_2,\dots,z_d)\in(\C^{*})^d$ with nonzero 
components, and nonzero parameters 
$\al=(\al_0,\dots,\al_{d+2})\in(\C^{*})^{d+3}$.

We define dual variables 
$\nt\in\left(\C\mod \frac{2\pi i}{\log(q)}\,\Z\right)^d$ , 
$\zt\in(\C^{*})^d$ and dual parameters $\alt_j\in(\C^{*})^{d+3}$ by
\begin{subequations}\label{5.1}
\begin{align}
q^{\nt_{j}}&=\frac{\al_{d+1-j}z_{d+1-j}}{\al_{d+2-j}z_{d+2-j}}
&&\text{for }j=1,2,\dots,d \label{5.1a} \\
\zt_{j}&=\frac{\al_{d+2-j}}{\al_0}q^{N_{d+1-j}-1/2} 
&&\text{for }j=1,2,\dots,d \label{5.1b}\\
\alt_0&=\al_0\label{5.1c}\\
\alt_j&=\frac{\al_0\al_{d+1}\al_{d+2}}{\al_{d+2-j}}q^{1/2}&&\text{for }
j=1,2,\dots,d+1\label{5.1d}\\
\alt_{d+2}&=\frac{\al_{1}}{\al_{0}}q^{-1/2},\label{5.1e}
\end{align}
\end{subequations}
where as before we have set $z_{d+1}=\al_{d+2}$ and $N_{k}=n_1+\cdots+n_k$.
In analogy with \eqref{2.5} we put $\xt_j=\frac{1}{2}(\zt_j+\zt_j^{-1})$.

\begin{Lemma}\label{le5.1}
The mapping $\ff:(n,z,\al)\rightarrow(\nt,\zt,\alt)$ given by \eqref{5.1}
defines an involution on 
$\left(\C\mod \frac{2\pi i}{\log(q)}\,\Z\right)^d\times (\C^{*})^d\times 
(\C^{*})^{d+3}$.
\end{Lemma}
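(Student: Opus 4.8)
The statement asserts that $\ff\circ\ff=\Id$, so the plan is simply to apply the prescription \eqref{5.1} twice and check that the second application returns $(n,z,\al)$. Write $(\nt,\zt,\alt)=\ff(n,z,\al)$ and let $(\tilde{\tilde n},\tilde{\tilde z},\tilde{\tilde\al})=\ff(\nt,\zt,\alt)$ denote the second application, using the tilded boundary conventions $\zt_0=\alt_0$ and $\zt_{d+1}=\alt_{d+2}$ that mirror $z_0=\al_0$, $z_{d+1}=\al_{d+2}$. I would verify the three blocks of \eqref{5.1} separately, exploiting the fact that the parameter rule \eqref{5.1c}--\eqref{5.1e} involves $\al$ alone and so decouples from the variables.

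First I would dispose of the parameters. Since $\alt$ depends only on $\al$, one checks directly from \eqref{5.1c}--\eqref{5.1e} that $\al\mapsto\alt$ is an involution of $(\C^{*})^{d+3}$; the only facts needed are the one-line cancellation $\alt_0\alt_{d+1}\alt_{d+2}=\al_0\al_{d+1}\al_{d+2}$ and the observation that the index reversal $j\mapsto d+2-j$ in \eqref{5.1d} is itself an involution of $\{1,\dots,d+1\}$. The engine for the $z$- and $n$-blocks is a telescoping identity: multiplying \eqref{5.1a} over $j=1,\dots,k$ collapses to
\[
q^{\Nt_k}=\frac{\al_{d+1-k}z_{d+1-k}}{\al_{d+1}\al_{d+2}},
\qquad \Nt_k=\nt_1+\cdots+\nt_k,
\]
where I have used $z_{d+1}=\al_{d+2}$. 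Substituting this closed form together with \eqref{5.1d} into the rule \eqref{5.1b} for $\zt$ gives, after the powers of $q^{\pm 1/2}$ and the factors $\al_0\al_{d+1}\al_{d+2}$ cancel, exactly $\tilde{\tilde z}_j=z_j$. Finally, feeding $\zt$ and $\alt$ back into \eqref{5.1a} yields $q^{\tilde{\tilde n}_j}=q^{N_j-N_{j-1}}=q^{n_j}$, which is the desired equality modulo $\tfrac{2\pi i}{\log q}\,\Z$, and this is precisely why the $n$-variables are taken in that quotient.

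The individual computations are routine cancellations; the one place demanding care — and the main obstacle — is the bookkeeping of the index reversals $j\mapsto d+1-j$ and $j\mapsto d+2-j$ together with the boundary conventions. In particular I must confirm that the expression for $\zt_{d+2-j}$ coming from \eqref{5.1b} agrees with the convention $\zt_{d+1}=\alt_{d+2}$ at $j=1$ (it does, since $N_0=0$), and that the indices $d+1-j$, $d+2-j$ invoked in \eqref{5.1d} stay inside $\{1,\dots,d+1\}$ throughout. Once these edge cases are aligned, each of the three blocks reduces to a single algebraic identity, and collecting them proves $\ff\circ\ff=\Id$.
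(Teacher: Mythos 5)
Your proposal is correct and follows essentially the same route as the paper, whose proof consists of the single remark that formulas \eqref{5.1} yield $\ff\circ\ff=\Id$ by direct verification; your telescoped identity $q^{\Nt_k}=\al_{d+1-k}z_{d+1-k}/(\al_{d+1}\al_{d+2})$, the cancellation $\alt_0\alt_{d+1}\alt_{d+2}=\al_0\al_{d+1}\al_{d+2}$, and the boundary check $\zt_{d+1}=\alt_{d+2}$ at $N_0=0$ all hold and correctly fill in the computation the paper leaves implicit (the same identity $\al_jz_j=\al_{d+1}\al_{d+2}q^{\Nt_{d+1-j}}$ reappears in the paper's proof of Theorem \ref{th5.3}).
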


\begin{proof}
Using formulas \eqref{5.1} one can easily check that $\ff\circ\ff=\Id$. 
\end{proof}

Iterating a formula of Sears we obtain the following lemma.
\begin{Lemma}\label{le5.2}
If $k\in\N_0$ and $abc=defq^{k-1}$ then 
\begin{equation}\label{5.2}
\begin{split}
&(d,e,f;q)_{k}\,\fpt{q^{-k}}{a}{b}{c}{d}{e}{f}=
c^k(b,aq^{1-k}/e,e/c;q)_{k}\\
&\qquad\times
\fpt{q^{-k}}{q^{1-k}/e}{d/b}{f/b}{q^{1-k}/b}{aq^{1-k}/e}{cq^{1-k}/e}.
\end{split}
\end{equation}
\end{Lemma}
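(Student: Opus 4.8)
The starting observation is that the constraint $abc=defq^{k-1}$ is exactly the condition making the terminating ${}_4\phi_3$ on the left of \eqref{5.2} \emph{balanced}: counting the factor $q^{-k}$ among the numerator parameters, the product of the numerator parameters multiplied by $q$ equals the product of the denominator parameters. A direct check shows that the series on the right of \eqref{5.2} is balanced and terminating as well, so \eqref{5.2} is an identity between two balanced terminating ${}_4\phi_3$ series and hence belongs to the symmetry group of such series. My plan is to produce the right-hand side from the left-hand side by composing two elementary transformations and then to reconcile the resulting prefactor with $c^k(b,aq^{1-k}/e,e/c;q)_k$. Since a single application of Sears' transformation always leaves one of the original numerator parameters $a,b,c$ fixed, while none of them survives on the right of \eqref{5.2}, one transformation cannot suffice; this is what forces the iteration.

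First I would reverse the order of summation $j\mapsto k-j$ in the left-hand series. Using the elementary identity $(\alpha;q)_{k-j}=\frac{(-q/\alpha)^{j}\,q^{\binom{j}{2}-kj}}{(q^{1-k}/\alpha;q)_{j}}\,(\alpha;q)_{k}$ for each of the six $q$-shifted factorials and invoking the balance relation to collapse the accompanying monomial in $q^{j}$ to $q^{j}$, the left-hand ${}_4\phi_3$ becomes
\[
\frac{(q^{-k},a,b,c;q)_k}{(q,d,e,f;q)_k}\,q^k\;
\fpt{q^{-k}}{q^{1-k}/d}{q^{1-k}/e}{q^{1-k}/f}{q^{1-k}/a}{q^{1-k}/b}{q^{1-k}/c}.
\]
Next I would apply Sears' transformation \cite{GR1}, in the form
\[
\fpt{q^{-k}}{a'}{b'}{c'}{d'}{e'}{f'}
=\frac{(e'/c',f'/c';q)_k}{(e',f';q)_k}(c')^{k}\,
\fpt{q^{-k}}{c'}{d'/a'}{d'/b'}{d'}{c'q^{1-k}/e'}{c'q^{1-k}/f'},
\]
to this reversed series, choosing the distinguished numerator parameter to be $q^{1-k}/e$ and the distinguished denominator parameter to be $q^{1-k}/b$. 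A short computation shows that this choice sends the remaining numerator parameters $q^{1-k}/d,q^{1-k}/f$ to $d/b,f/b$ and the remaining denominator parameters $q^{1-k}/a,q^{1-k}/c$ to $aq^{1-k}/e,cq^{1-k}/e$, so that the resulting series is exactly the ${}_4\phi_3$ on the right of \eqref{5.2}.

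It remains to multiply the two prefactors and to match the outcome against the left-hand factor $(d,e,f;q)_k$ of \eqref{5.2}. The denominators $(d,e,f;q)_k$ created by the reversal cancel against this factor, and after substituting the values of the distinguished parameters the Sears prefactor contributes $\frac{(e/a,e/c;q)_k}{(q^{1-k}/a,q^{1-k}/c;q)_k}(q^{1-k}/e)^{k}$. I would then repeatedly use the closed form $(q^{1-k}/\alpha;q)_k=(-1)^k\alpha^{-k}q^{-\binom{k}{2}}(\alpha;q)_k$, together with $abc=defq^{k-1}$, to rewrite $(q^{1-k}/a;q)_k$, $(q^{1-k}/c;q)_k$ and $(aq^{1-k}/e;q)_k$ in terms of $(a;q)_k,(c;q)_k,(e/a;q)_k$. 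The powers of $q$, the signs $(-1)^k$ and the factors $q^{-\binom{k}{2}}$ then collapse, leaving precisely $c^k(b,aq^{1-k}/e,e/c;q)_k$. This last bookkeeping step is where I expect the only genuine difficulty to lie; I would guard against sign and $q$-power errors by checking the identity directly for $k=0$, where both sides equal $1$, and for $k=1$, where both ${}_4\phi_3$ series reduce to two-term sums.
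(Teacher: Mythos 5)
Your proof is correct, but it factors the transformation differently than the paper does. The paper's proof applies Sears' formula \eqref{5.3} twice -- once as stated, and then again with $a,b,c,d,e,f$ replaced by $d/b,a,d/c,aq^{1-k}/e,d,aq^{1-k}/f$ -- and finishes with the elementary inversion $(s;q)_{k}=(-s)^{k}q^{\binom{k}{2}}(q^{1-k}/s;q)_{k}$; you instead decompose the same element of the invariance group of balanced terminating ${}_4\phi_3$ series as (reversal of summation) composed with a single Sears transformation. I checked the details: your reversal identity is correct, and the balance condition $abc=defq^{k-1}$ does make the residual monomial collapse to $q^{j}$, yielding the intermediate series with parameters $q^{-k},q^{1-k}/d,q^{1-k}/e,q^{1-k}/f$ over $q^{1-k}/a,q^{1-k}/b,q^{1-k}/c$; your pivot choice $c'=q^{1-k}/e$, $d'=q^{1-k}/b$ in Sears then produces exactly the ${}_4\phi_3$ on the right of \eqref{5.2}; and the prefactor bookkeeping you defer does close: after cancelling $(d,e,f;q)_k$, the surviving factor is $(-1)^{k}q^{-\binom{k}{2}}(a,b,c;q)_{k}\,\frac{(e/a,e/c;q)_{k}}{(q^{1-k}/a,q^{1-k}/c;q)_{k}}\,(q^{1-k}/e)^{k}$, and applying the inversion identity to $\alpha=a,c$ and writing $(e/a;q)_{k}=(-1)^{k}(e/a)^{k}q^{\binom{k}{2}}(aq^{1-k}/e;q)_{k}$ makes the signs and $q$-powers cancel via $\binom{k}{2}+k-k^{2}=-\binom{k}{2}$, leaving precisely $c^{k}(b,aq^{1-k}/e,e/c;q)_{k}$. (One small correction of emphasis: the balance relation is not actually needed in this last step, only in the reversal, so the final bookkeeping is even more routine than you anticipated.) As to what each route buys: the paper's argument uses a single tool iterated twice, which fits its narrative of ``iterating Sears' transformation,'' whereas yours replaces one Sears application by the elementary reindexing $j\mapsto k-j$, so only one genuinely nontrivial transformation is invoked at the cost of comparable $q$-power bookkeeping; your opening observation -- that a single standard Sears application fixes one numerator and one denominator parameter, none of which survive on the right of \eqref{5.2} -- is a nice structural explanation, absent from the paper, of why a two-step composition is forced.
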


\begin{proof}
Sears' formula (see for instance \cite[page~49, formula (2.10.4)]{GR1}) gives
\begin{equation}\label{5.3}
\begin{split}
&(d,e,f;q)_{k}\,\fpt{q^{-k}}{a}{b}{c}{d}{e}{f}=
a^k(d,e/a,f/a,;q)_{k}\\
&\qquad\times
\fpt{q^{-k}}{a}{d/b}{d/c}{d}{aq^{1-k}/e}{aq^{1-k}/f}.
\end{split}
\end{equation}
Applying the same formula on the right-hand side of \eqref{5.3} with 
$a,b,c,d,e,f$ replaced by $d/b,a,d/c,aq^{1-k}/e,d,aq^{1-k}/f$ respectively 
and using the identity 
$$(s;q)_{k}=(-s)^{k}q^{(k-1)k/2}(q^{1-k}/s;q)_{k}$$
we obtain \eqref{5.2}.
\end{proof}

Let us normalize the multivariable Askey-Wilson polynomials \eqref{2.9} as 
follows
\begin{equation}\label{5.4}
\Ph_d(n;x;\al)=\frac{(\al_{d+1}\al_{d+2})^{|n|}P_d(n;x;\al)}
{(\al_{d+1}\al_{d+2},\al_{d+1}\al_{d+2}/\al_0^2;q)_{|n|}
\prod_{j=1}^d\al_j^{n_j}(\al_{j+1}^2/\al_j^2;q)_{n_j}}.
\end{equation}
The main result in this subsection is the following theorem.

\begin{Theorem}\label{th5.3} 
If $(n,z,\al)$ and $(\nt,\zt,\alt)$ are related by 
\eqref{5.1} and if $n,\nt\in\N_0^d$ then
\begin{equation}\label{5.5}
\Ph_d(n;x;\al)=\Ph_d(\nt;\xt;\alt).
\end{equation}
\end{Theorem}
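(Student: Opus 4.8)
The plan is to prove the duality \eqref{5.5} by induction on $d$, peeling off one factor at a time and using Sears' transformation (\leref{le5.2}) to convert each one-dimensional Askey--Wilson polynomial in the $(n,z,\al)$ variables into the corresponding factor in the dual $(\nt,\zt,\alt)$ variables. Since $\Ph_d$ is a product of $d$ one-dimensional factors $p_{n_j}$, each of which is itself a ${}_4\phi_3$ (up to the normalizing prefactor in \eqref{5.4}), the natural strategy is to show that applying Sears' formula to each factor \emph{reindexes} it so that the product reassembles into $\Ph_d(\nt;\xt;\alt)$.

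First I would write out a single factor $p_{n_j}$ from \eqref{2.9} explicitly as a ${}_4\phi_3$ using \eqref{2.1}, with its four Askey--Wilson parameters $a=\al_jq^{N_{j-1}}$, $b=\frac{\al_j}{\al_0^2}q^{N_{j-1}}$, $c=\frac{\al_{j+1}}{\al_j}z_{j+1}$, $d=\frac{\al_{j+1}}{\al_j}z_{j+1}^{-1}$, and the argument built from $az,az^{-1}$ where here $z=z_j$. The key is to match the balancing condition $abc=defq^{k-1}$ in \leref{le5.2} with the terminating parameter $k=n_j$ and to check that the ${}_4\phi_3$ coming from $p_{n_j}$ is balanced in the required way. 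Then Sears' identity \eqref{5.2} rewrites this ${}_4\phi_3$ as a new ${}_4\phi_3$ whose terminating index and whose entries I expect to match exactly the $j$-th factor of $\Ph_d(\nt;\xt;\alt)$ under the substitution \eqref{5.1}. The crucial bookkeeping is that $q^{\nt_{d+1-j}}=\frac{\al_j z_j}{\al_{j+1}z_{j+1}}$, so that the shifted factorials $(\al_{j+1}^2/\al_j^2;q)_{n_j}$ and the prefactors $(\al_{d+1}\al_{d+2})^{|n|}$, $\al_j^{n_j}$ in \eqref{5.4} are precisely what is needed to absorb the $c^k$ and $(d,e,f;q)_k$ type factors produced by Sears, leaving a clean equality.

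The main obstacle will be the \emph{indexing/telescoping algebra}: verifying that after transforming all $d$ factors, the dual parameters $\alt_j$ defined in \eqref{5.1d}--\eqref{5.1e}, the dual variables $\zt_j$ in \eqref{5.1b}, and the cumulative sums $N_k$ versus $\tilde N_k$ interlock correctly. In particular, the nested structure of \eqref{2.9}---where the parameters of the $j$-th factor involve $q^{N_{j-1}}$ and $z_{j+1}$---gets reversed under $\ff$ (note the index $d+1-j$ in \eqref{5.1a}--\eqref{5.1b}), so I expect the $j$-th factor of $\Ph_d(n;x;\al)$ to map to the $(d+1-j)$-th factor of $\Ph_d(\nt;\xt;\alt)$, and confirming this reversal is consistent across all factors simultaneously is the delicate point. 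I would first establish the base case $d=1$, where \eqref{5.5} reduces to a single clean application of Sears combined with the normalization \eqref{5.4} and the definitions \eqref{5.1}; this already forces the specific form of the dual map and serves as a check that \eqref{5.1} is the correct involution (consistent with \leref{le5.1}).

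Finally, for the inductive step I would isolate the dependence on the ``outermost'' variables, using \leref{le5.1} to guarantee that the transformation is an involution so the roles of $(n,z,\al)$ and $(\nt,\zt,\alt)$ are symmetric. I expect that once the single-factor Sears computation is pinned down and the reversal of indices is verified, the product form makes the induction essentially formal: stripping the factor $p_{n_1}$ (which depends on $z_1,z_2$ and parameters $\al_0,\al_1,\al_2$) and showing its Sears transform is the $d$-th factor of the dual, with the remaining $d-1$ factors handled by the induction hypothesis applied to a reduced parameter set, in the same spirit as the inductive proof of \thref{th2.1}.
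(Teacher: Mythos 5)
Your proposal is correct and follows essentially the same route as the paper: the heart of both is to apply the iterated Sears identity (\leref{le5.2}) to each one-dimensional factor $p_{n_j}$, with exactly the slot assignments that make its ${}_4\phi_3$ part coincide with that of the dual factor $p_{\nt_{d+1-j}}$ (the index reversal $j\mapsto d+1-j$ you anticipate), leaving only $q$-shifted-factorial prefactors to be matched against the normalization \eqref{5.4}. The only difference is organizational: rather than inducting on $d$, the paper finishes in one shot by forming the ratio $P_d(n;x;\al)/P_d(\nt;\xt;\alt)$ after all ${}_4\phi_3$ terms cancel and telescoping the resulting products as in \eqref{5.7}--\eqref{5.10} (note the prefactors do not match factor by factor, only globally), and your inductive assembly---stripping $p_{n_1}$ and applying the reduction from the proof of \thref{th2.1}, whose reduced parameters do dualize compatibly---amounts to a repackaging of the same bookkeeping.
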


\begin{proof}
Using \eqref{2.1} and applying formula \eqref{5.2} with $k,a,b,c,d,e,f$ 
replaced by 
$n_j$, $\al_{j+1}^2q^{N_{j-1}+N_{j}-1}/\al_0^2$, $\al_jq^{N_{j-1}}z_{j}$, 
$\al_{j}q^{N_{j-1}}z_{j}^{-1}$, $\al_{j}^2q^{2N_{j-1}}/\al_{0}^2$, 
$\al_{j+1}q^{N_{j-1}}z_{j+1}^{-1}$, $\al_{j+1}q^{N_{j-1}}z_{j+1}$
we can write the polynomial $p_{n_j}$ in \eqref{2.9} as follows

\begin{equation}\label{5.6}
\begin{split}
p_{n_j}&=(z_j)^{-n_{j}}
(\al_jq^{N_{j-1}}z_j,\al_{j+1}q^{N_{j-1}}z_{j+1}/\al_0^2,
\al_{j+1}\al_{j}^{-1}z_{j}z_{j+1}^{-1};q)_{n_j}\\
&\qquad\times 
\fpt{q^{-n_j}}{z_{j+1}q^{1-N_j}\al_{j+1}^{-1}}
{\al_0^{-2}\al_{j}q^{N_{j-1}}z_{j}^{-1}}
{\al_{j+1}\al_{j}^{-1}z_{j}^{-1}z_{j+1}}{q^{1-N_j}\al_{j}^{-1}z_{j}^{-1}}
{\al_{0}^{-2}\al_{j+1}q^{N_{j-1}}z_{j+1}}
{q^{1-n_j}\al_{j}\al_{j+1}^{-1}z_{j}^{-1}z_{j+1}}.
\end{split}
\end{equation}
Using the above formula for $p_{n_j}$ and \eqref{5.1} one can check that the 
${}_{4}\phi_3$ term of $p_{n_j}$ in the variables $(n,z,\al)$
coincides with the ${}_{4}\phi_3$ term of $p_{\nt_{d+1-j}}$ in the variables
$(\nt;\zt;\alt)$ for $j=1,2,\dots,d$.
Thus if we compute the ratio $P_d(n;x;\al)/P_d(\nt;\xt;\alt)$ all 
${}_{4}\phi_3$ terms cancel and we get 
\begin{equation}\label{5.7}
\frac{P_d(n;x;\al)}{P_d(\nt;\xt;\alt)}
=\prod_{j=1}^{d}
\frac{(z_j)^{-n_{j}}(\al_jq^{N_{j-1}}z_j,\al_{j+1}q^{N_{j-1}}z_{j+1}/\al_0^2,
\al_{j+1}\al_{j}^{-1}z_{j}z_{j+1}^{-1};q)_{n_j}}
{(\zt_j)^{-\nt_{j}}(\alt_jq^{\Nt_{j-1}}\zt_j,
\alt_{j+1}q^{\Nt_{j-1}}\zt_{j+1}/\alt_0^2,
\alt_{j+1}\alt_{j}^{-1}\zt_{j}\zt_{j+1}^{-1};q)_{\nt_j}}.
\end{equation}
Using \eqref{5.1} we can eliminate $z_j$ and $\zt_j$ in the right-hand side 
of \eqref{5.7}. From \eqref{5.1d}-\eqref{5.1e} it follows that 
$\al_{d+1}\al_{d+2}=\alt_{d+1}\alt_{d+2}.$ 
Next, notice that $\al_jz_j=\al_{d+1}\al_{d+2}q^{\Nt_{d+1-j}}$. Thus we have
\begin{equation*}
\prod_{j=1}^{d}\frac{z_j^{-n_j}}{\zt_j^{-\nt_j}}=
\prod_{j=1}^{d}\frac{\left(\al_j\al_{d+1}^{-1}\al_{d+2}^{-1}\right)^{n_j}}
{\left(\alt_j\alt_{d+1}^{-1}\alt_{d+2}^{-1}\right)^{\nt_{j}}}
\prod_{j=1}^{d}\frac{q^{-\Nt_{d+1-j}n_j}}{q^{-N_{d+1-j}\nt_j}}.
\end{equation*}
It is easy to see that the second product on the right-hand side of the above 
formula is equal to one, by replacing $n_j=N_{j}-N_{j-1}$ and 
$\nt_j=\Nt_{j}-\Nt_{j-1}$. Therefore
\begin{equation}\label{5.8}
\prod_{j=1}^{d}\frac{z_j^{-n_j}}{\zt_j^{-\nt_j}}=
\prod_{j=1}^{d}\frac{\left(\al_j\al_{d+1}^{-1}\al_{d+2}^{-1}\right)^{n_j}}
{\left(\alt_j\alt_{d+1}^{-1}\alt_{d+2}^{-1}\right)^{\nt_{j}}}.
\end{equation}
Using 
the identity 
$$(aq^{l};q)_{k}=\frac{(a;q)_{l+k}}{(a;q)_{l}}\quad\text{ for }\quad 
k,l\in\N_0,$$
we obtain
\begin{equation*}
\begin{split}
&\prod_{j=1}^d\frac{(\al_jq^{N_{j-1}}z_j;q)_{n_j}}
{(\alt_jq^{\Nt_{j-1}}\zt_j;q)_{\nt_j}}
=\prod_{j=1}^d\frac{(\al_{d+1}\al_{d+2}q^{N_{j-1}+\Nt_{d+1-j}};q)_{n_j}}
{(\al_{d+1}\al_{d+2}q^{\Nt_{j-1}+N_{d+1-j}};q)_{\nt_j}}\\
&\qquad=
\prod_{j=1}^d\frac{(\al_{d+1}\al_{d+2};q)_{N_{j}+\Nt_{d+1-j}}}
{(\al_{d+1}\al_{d+2};q)_{\Nt_{j}+N_{d+1-j}}}
\prod_{j=1}^d\frac{(\al_{d+1}\al_{d+2};q)_{\Nt_{j-1}+N_{d+1-j}}}
{(\al_{d+1}\al_{d+2};q)_{N_{j-1}+\Nt_{d+1-j}}}.
\end{split}
\end{equation*}
It is easy to see now that the first product above is equal to 1, while 
the second simplifies to $\frac{(\al_{d+1}\al_{d+2};q)_{N_{d}}}
{(\al_{d+1}\al_{d+2};q)_{\Nt_{d}}}$. Thus we have
\begin{equation}\label{5.9}
\prod_{j=1}^d\frac{(\al_jq^{N_{j-1}}z_j;q)_{n_j}}
{(\alt_jq^{\Nt_{j-1}}\zt_j;q)_{\nt_j}}
=\frac{(\al_{d+1}\al_{d+2};q)_{|n|}}
{(\alt_{d+1}\alt_{d+2};q)_{|\nt|}}.
\end{equation}
Similar manipulations show that
\begin{equation}\label{5.10}
\begin{split}
&\prod_{j=1}^{d}
\frac{(\al_{j+1}q^{N_{j-1}}z_{j+1}/\al_0^2,
\al_{j+1}\al_{j}^{-1}z_{j}z_{j+1}^{-1};q)_{n_j}}
{(\alt_{j+1}q^{\Nt_{j-1}}\zt_{j+1}/\alt_0^2,
\alt_{j+1}\alt_{j}^{-1}\zt_{j}\zt_{j+1}^{-1};q)_{\nt_j}}\\
&\qquad=
\frac{(\al_{d+1}\al_{d+2}/\al_0^2;q)_{|n|}}
{(\alt_{d+1}\alt_{d+2}/\alt_0^2;q)_{|\nt|}}
\prod_{j=1}^{d}
\frac{(\al_{j+1}^2/\al_{j}^2;q)_{n_j}}{(\alt_{j+1}^2/\alt_{j}^2;q)_{\nt_j}}.
\end{split}
\end{equation}
The proof follows from equations \eqref{5.7}, \eqref{5.8}, \eqref{5.9} and 
\eqref{5.10}.
\end{proof}

\subsection{The commutative algebra $\cAn$}\label{ss5.2}
We denote by $\cDza$ the associative subalgebra of $\cDz$ of difference 
operators with coefficients depending rationally on the parameters $\al_j$.
Clearly, the commutative algebra $\cAz$ defined in \prref{pr4.5} is 
contained in $\cDza$. Similarly, we denote by $\cDna$ the associative 
algebra of difference operators in the variables $n=(n_1,n_2,\dots,n_d)$ with 
coefficients depending rationally on $q^{n_1},q^{n_2},\dots,q^{n_d}$ and the 
parameters $\al_j$. We use $E_{n_k}$ to denote the forward shift in the 
variable $n_{k}$, i.e. for every function $f(n)=f(n_1,\dots,n_d)$ we have 
$E_{n_k} f(n)=f(n+e_{k})$.

Replacing $j$ by $d+1-k$ in \eqref{5.1a} we see that
$$q^{\nt_{d+1-k}}=\frac{\al_{k}z_{k}}{\al_{k+1}z_{k+1}}.$$
From this equation it follows that a forward $q$-shift in the variable $z_k$ 
will correspond to a forward shift in $\nt_{d+1-k}$ and a backward shift in 
$\nt_{d+2-k}$ for $k=2,3,\dots,d$ and to a forward shift in $\nt_{d}$ when 
$k=1$. Thus, in view of the duality established in \thref{th5.3}, we define 
a function $\bi$ as follows
\begin{subequations}\label{5.11}
\begin{align}
\bi(\al_0)&=\al_0\label{5.11a}\\
\bi(\al_j)&=\frac{\al_0\al_{d+1}\al_{d+2}}{\al_{d+2-j}}q^{1/2}\qquad\text{for }
j=1,2,\dots,d+1\label{5.11b}\\
\bi(\al_{d+2})&=\frac{\al_{1}}{\al_{0}}q^{-1/2}\label{5.11c}\\
\bi(z_{j})&=\frac{\al_{d+2-j}}{\al_0}q^{n_1+n_2+\cdots+n_{d+1-j}-1/2} 
\qquad\text{for }j=1,2,\dots,d \label{5.11d}\\
\bi (\E{z_j})&=E_{n_{d+1-j}}E_{n_{d+2-j}}^{-1},
\text{ for }j=1,2,\dots,d  \label{5.11e}
\end{align}
\end{subequations}
with the convention that $E_{n_{d+1}}$ is the identity operator. 

\begin{Lemma}\label{le5.4}
The mapping \eqref{5.11} extends to an isomorphism $\bi:\cDza\rightarrow\cDna$.
In particular, the operators $\fLn_k$ defined by 
\begin{equation}\label{5.12}
\fLn_k=\bi(\fLz_k) \quad \text{ for }k=1,2,\dots,d
\end{equation}
commute with each other and therefore
\begin{equation}\label{5.13}
\cAn=\bi(\cAz)=\C[\fLn_1,\fLn_2,\dots,\fLn_d]
\end{equation}
is a commutative subalgebra of $\cDna$.
\end{Lemma}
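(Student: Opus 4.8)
The plan is to show that $\bi$ is a well-defined algebra homomorphism, that it is invertible, and then to deduce the commutativity of the $\fLn_k$ from the commutativity of the $\fLz_k$ established in \prref{pr4.5}. First I would verify that $\bi$ respects the defining relations of $\cDza$, namely the commutation rule \eqref{3.3}. Concretely, for each $j$ I must check that $\bi(\E{z_j})\cdot\bi(g(z))=\bi(g(zq^{e_j}))\cdot\bi(\E{z_j})$ for every rational function $g$. Since $\bi$ sends $z_j$ to the expression in \eqref{5.11d} involving $q^{n_1+\cdots+n_{d+1-j}}$ and sends $\E{z_j}$ to $E_{n_{d+1-j}}E_{n_{d+2-j}}^{-1}$, this reduces to verifying on generators: the shift $E_{n_{d+1-j}}E_{n_{d+2-j}}^{-1}$ must act on $\bi(z_k)$ exactly as $\bi$ of the $q$-shift $\E{z_j}$ acting on $z_k$. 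This is the natural compatibility that motivated the definition \eqref{5.11e} in the first place, and it follows from the observation recorded just before the lemma that a forward $q$-shift in $z_k$ corresponds to a forward shift in $\nt_{d+1-k}$ and a backward shift in $\nt_{d+2-k}$.

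The key structural point is that $\bi$ is nothing but the involution $\ff$ of \leref{le5.1} transported to the level of operators. The plan is to exploit this: since \eqref{5.11a}--\eqref{5.11d} are precisely the component formulas of $\ff$ from \eqref{5.1c}--\eqref{5.1e}, \eqref{5.1b}, and (reading \eqref{5.1a} with $j$ replaced by $d+1-k$) the relation between $z_k$ and $q^{\nt_{d+1-k}}$, the map $\bi$ on the commutative part of $\cDza$ is exactly the coordinate change given by $\ff$. Because \leref{le5.1} asserts $\ff\circ\ff=\Id$, I would argue that applying the analogous construction in the dual variables yields a two-sided inverse for $\bi$, establishing that $\bi$ is an isomorphism rather than merely a homomorphism. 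The only care needed is bookkeeping of the discrete-shift generators: I must confirm that composing $\bi$ with its dual returns $\E{z_j}$ and $z_j$ to themselves, which again reduces to the involutivity already proved in \leref{le5.1} together with the bijectivity of the index map $k\mapsto d+1-k$.

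Once $\bi:\cDza\to\cDna$ is an algebra isomorphism, the remaining assertions are formal. Since $\fLz_k\in\cAz\subset\cDza$ by \prref{pr4.5}, the operators $\fLn_k=\bi(\fLz_k)$ are well-defined elements of $\cDna$, and because $\bi$ is a ring homomorphism,
\begin{equation*}
[\fLn_i,\fLn_j]=\bi(\fLz_i)\bi(\fLz_j)-\bi(\fLz_j)\bi(\fLz_i)=\bi([\fLz_i,\fLz_j])=\bi(0)=0,
\end{equation*}
using the commutativity $[\fLz_i,\fLz_j]=0$ from \prref{pr4.5}. Hence $\cAn=\bi(\cAz)=\C[\fLn_1,\dots,\fLn_d]$ is commutative, and being the isomorphic image of $\cAz$ it lies in $\cDna$, giving \eqref{5.12} and \eqref{5.13}.

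I expect the main obstacle to be the verification that $\bi$ is correctly defined on all of $\cDza$, i.e. that it genuinely preserves the relations \eqref{3.3} rather than merely acting sensibly on generators. The delicate part is the interaction between the rational coefficients in $q^{n_k}$ and the noncommuting shifts $E_{n_k}$: one must track how $\bi(\E{z_j})=E_{n_{d+1-j}}E_{n_{d+2-j}}^{-1}$ shifts the argument $n_1+\cdots+n_{d+1-k}$ appearing in $\bi(z_k)$, and confirm that the resulting shift in $q^{\bi(z_k)}$ matches $\bi$ applied to the $q$-shifted $z_k$ for \emph{all} indices $k$, including the boundary cases $k=1$ (where $E_{n_{d+1}}=\Id$) and the telescoping overlaps between consecutive $j$. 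Establishing this compatibility cleanly—ideally by checking it on the generators $z_1,\dots,z_d$ and invoking that $\cDza$ is generated by these together with the shifts—is where the real work lies; everything downstream is an immediate consequence of $\bi$ being a homomorphism.
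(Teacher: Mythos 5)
Your proposal is correct and follows essentially the same route as the paper: verify that $\bi$ preserves the relations \eqref{3.3} by checking $\bi(\E{z_k})\cdot\bi(f(z))=\bi(f(zq^{e_k}))\,\bi(\E{z_k})$ on generators, note that $\bi$ is bijective, and then deduce $[\fLn_i,\fLn_j]=\bi([\fLz_i,\fLz_j])=0$ formally from \prref{pr4.5}. The only difference is cosmetic: where the paper dismisses bijectivity with ``follows easily,'' you justify it via the involutivity of $\ff$ from \leref{le5.1} and the telescoping of the shifts $E_{n_{d+1-j}}E_{n_{d+2-j}}^{-1}$, which is a legitimate way to fill in that step.
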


\begin{proof}
Using \eqref{5.11} one can check that
$$\bi(\E{z_k})\cdot \bi(f(z))=\bi(f(zq^{e_k}))\bi(\E{z_k}),$$
holds for every $k=1,2,\dots, d$, which shows that $\bi$ is a well defined 
homomorphism from $\cDza$ to $\cDna$. The fact that $\bi$ is one-to-one and 
onto follows easily.
\end{proof}

Recall that the eigenvalues $\mu_j$ of the operators $\fLz_j$ are given in 
\eqref{4.11b}. We denote
\begin{equation}\label{5.14}
\begin{split}
\kappa_j&=\bi^{-1}(\mu_j)=
-\left(1-\frac{\al_{d+1}\al_{d+2}}{\al_{d+1-j}z_{d+1-j}}\right)
\left(1-\frac{\al_{d+1}\al_{d+2}z_{d+1-j}}{\al_{d+1-j}}\right)\\
&=-1-\frac{\al_{d+1}^2\al_{d+2}^2}{\al_{d+1-j}^2}+
\frac{2\al_{d+1}\al_{d+2}}{\al_{d+1-j}}x_{d+1-j}.
\end{split}
\end{equation}

With these notations we can formulate the main result.
\begin{Theorem}\label{th5.5}
The polynomials $\Ph_d(n;x;\al)$ defined by equations
\eqref{2.9} and \eqref{5.4} diagonalize the algebras $\cAz$ and $\cAn$. 
More precisely, the following spectral equations hold
\begin{subequations}\label{5.15}
\begin{align}
&\fLz_j \Ph_d(n;x;\al) =\mu_j\Ph_d(n;x;\al)\label{5.15a}\\
&\fLn_j \Ph_d(n;x;\al) =\kappa_j\Ph_d(n;x;\al),\label{5.15b}
\end{align}
\end{subequations}
for $j=1,2,\dots,d$ where $\fLz_j,\mu_j$ are given in \eqref{4.11}
and $\fLn_j,\kappa_j$ are given in \eqref{5.12}, \eqref{5.14}.
\end{Theorem}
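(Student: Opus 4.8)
The plan is to prove the two families of spectral equations \eqref{5.15a} and \eqref{5.15b} essentially in sequence, using the machinery already assembled. Equation \eqref{5.15a} should require almost no new work: the operators $\fLz_j$ and the eigenvalues $\mu_j$ are exactly those of \prref{pr4.5}, and the normalization \eqref{5.4} multiplies $P_d(n;x;\al)$ by a factor that depends only on $n$ and $\al$, not on $x$ (equivalently, not on $z$). Since each $\fLz_j$ acts only on the $z$-variables and treats the $n$-dependent normalizing constant as a scalar, the eigenvalue equation \eqref{4.12} passes immediately to $\Ph_d(n;x;\al)$ with the same eigenvalue $\mu_j$. So the first step is simply to record that \eqref{5.15a} is \eqref{4.12} rewritten for the normalized polynomials.

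The substance of the theorem is \eqref{5.15b}, and here the plan is to transport \eqref{5.15a} through the duality. First I would apply the algebra isomorphism $\bi:\cDza\to\cDna$ of \leref{le5.4} to the operator identity \eqref{5.15a}. Because $\bi$ sends the continuous-variable data $(z,\al)$ to the discrete-variable data according to \eqref{5.11}, and because those formulas are precisely the involution $\ff$ of \leref{le5.1} read off via \eqref{5.1}, the image $\bi(\fLz_j)=\fLn_j$ and the image of the scalar $\mu_j$ is $\kappa_j$ as computed in \eqref{5.14}. The remaining point is to verify that applying $\bi$ to the eigenfunction relation yields a genuine eigenfunction relation in the dual variables. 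For this I would invoke \thref{th5.3}: the duality $\Ph_d(n;x;\al)=\Ph_d(\nt;\xt;\alt)$ means that, under $\ff$, the normalized polynomial is \emph{self-dual}, so the function on which $\fLn_j$ acts is literally the same object $\Ph_d$, now regarded as a function of the discrete variables. Thus \eqref{5.15a}, pulled back through $\ff$, becomes exactly \eqref{5.15b}.

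Concretely, the key steps in order are: (1) observe that $\fLz_j\Ph_d(n;x;\al)=\mu_j\Ph_d(n;x;\al)$ follows from \prref{pr4.5} because the normalization is $z$-independent; (2) write this relation for the dual triple $(\nt,\zt,\alt)$ in place of $(n,z,\al)$, which is legitimate since \eqref{4.12} holds for arbitrary parameters; (3) use \thref{th5.3} to replace $\Ph_d(\nt;\xt;\alt)$ by $\Ph_d(n;x;\al)$ on both sides; and (4) translate the $\zt$-difference operator $\fLz_j$ evaluated at the dual point into the $n$-difference operator $\fLn_j=\bi(\fLz_j)$ via \eqref{5.11d}--\eqref{5.11e}, and the dual eigenvalue $\mu_j(\nt,\alt)$ into $\kappa_j$ via \eqref{5.14}. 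The commutativity of the $\fLn_j$ is already guaranteed by \leref{le5.4}, so no separate argument is needed there.

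The main obstacle I anticipate is bookkeeping rather than conceptual: making precise the claim in step (4) that ``the $\zt_j$-shift at the dual point is the $n$-shift $\fLn_j$.'' One must check carefully that a $q$-shift $\E{z_j}$ acting in the original variables corresponds under $\ff$ to the composite shift $E_{n_{d+1-j}}E_{n_{d+2-j}}^{-1}$ in the discrete variables, which is exactly the content of \eqref{5.11e} and the remark preceding it derived from $q^{\nt_{d+1-k}}=\al_k z_k/(\al_{k+1}z_{k+1})$. Since $\bi$ is defined so as to encode precisely this correspondence and is shown to be an algebra isomorphism in \leref{le5.4}, the verification reduces to confirming that $\bi$ applied to the defining data of $\fLz_j$ and of $\mu_j$ reproduces $\fLn_j$ and $\kappa_j$; the index shift $j\mapsto d+1-j$ coming from \eqref{5.1a} is the one place where a sign or indexing error could creep in, so that is where I would concentrate the care.
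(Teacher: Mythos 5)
Your route is, in outline, the paper's own: \eqref{5.15a} is immediate from \prref{pr4.5} because the normalizing factor in \eqref{5.4} is independent of $z$, and \eqref{5.15b} is obtained by writing the spectral relation \eqref{4.12} at the dual point $(\nt,\zt,\alt)$ (legitimate since \prref{pr4.5} holds for all $\al\in(\C^*)^{d+3}$), invoking the self-duality $\Ph_d(n;x;\al)=\Ph_d(\nt;\xt;\alt)$ of \thref{th5.3}, and translating the $\zt$-operator and its eigenvalue into $\fLn_j$ and $\kappa_j$ via \eqref{5.11} and \eqref{5.14}. Your eigenvalue bookkeeping in step (4) is also sound: a direct computation with \eqref{5.1a} and \eqref{5.1d} shows $\mu_j(\nt,\alt)=\kappa_j(z,\al)$, which is exactly what \eqref{5.14} records.

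There is, however, a genuine gap: \thref{th5.3} is available only when \emph{both} $n$ and $\nt$ lie in $\N_0^d$. With $n\in\N_0^d$ and $\al$ fixed, the dual exponents $\nt$ are determined by $z$ through \eqref{5.1a}, and the condition $\nt\in\N_0^d$ confines $z$ to the discrete lattice $z_k=\frac{\al_{d+1}\al_{d+2}}{\al_k}\,q^{\nt_1+\cdots+\nt_{d+1-k}}$, $\nt\in\N_0^d$. Hence your steps (2)--(4) establish \eqref{5.15b} only at these special values of $z$, whereas the theorem asserts an identity for all $z$ (which is what is needed for $\Ph_d(n;x;\al)$ to be an eigenfunction of $\fLn_j$ as a polynomial in $x$). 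The paper closes this with an analytic-continuation step that your proposal omits: substituting $z$ in terms of $q^{\nt}$ as above, both sides of \eqref{5.15b} become Laurent polynomials in $q^{\nt_1},\dots,q^{\nt_d}$; since they agree for every $\nt\in\N_0^d$, they agree identically, i.e.\ for all $\nt\in\C^d$ and hence all $z\in(\C^*)^d$. Without some such density argument the proof is incomplete. A smaller caution on phrasing: one cannot literally ``apply the isomorphism $\bi$ to \eqref{5.15a},'' since \eqref{5.15a} is a statement about the action of an operator on a particular function, not an identity inside $\cDza$; your appeal to \thref{th5.3} is the correct substitute, but that is precisely where the restriction $\nt\in\N_0^d$ --- and with it the need for the continuation step --- enters.
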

\begin{Remark}[Boundary conditions]\label{re5.6}
Since $\fLn_k$ contains backward shift operators, and since $\Ph_d(n;x;\al)$ 
is defined only for $n\in\N_0^d$ it is natural to ask what happens when 
$\fLn_k$ produces a term with a negative $n_j$ for some $j$. From 
\eqref{5.11e} it follows that $\bi(\E{z}^{\nu})$ with $\nu\in\{-1,0,1\}^d$
will contain a negative power of $E_{n_{d+2-j}}$ ($j\geq 2$) in one of the 
following two cases:\\
{\em Case 1: $\nu_j=1$, $\nu_{j-1}=0$.} In this case, $\bi(\E{z}^{\nu})$
will contain $E_{n_{d+2-j}}^{-1}$. Notice that the coefficient of 
$\E{z}^{\nu}$ is $C_{\nu}$ which has the factor 
$\left(1-\frac{\al_{j}z_{j}}{\al_{j-1}z_{j-1}}\right)$ 
(in $B_{j-1}^{0,1}$ - see formula 
\eqref{4.2b}). Since $\bi\left(1-\frac{\al_{j}z_{j}}{\al_{j-1}z_{j-1}}\right)
=(1-q^{-n_{d+2-j}})$ we see that this term is $0$ when $n_{d+2-j}=0$.\\
{\em Case 2: $\nu_j=1$, $\nu_{j-1}=-1$.} This time $\bi(\E{z}^{\nu})$ 
contains $E_{n_{d+2-j}}^{-2}$. The coefficient $C_{\nu}$ has the factor 
$B_{j-1}^{-1,1}= \left(1-\frac{\al_{j}z_{j}}{\al_{j-1}z_{j-1}}\right)
\left(1-\frac{q\al_{j}z_{j}}{\al_{j-1}z_{j-1}}\right)$
(see \eqref{4.2d} and \eqref{4.2e}). 
Since $\bi(B_{j-1}^{-1,1})=(1-q^{-n_{d+2-j}})(1-q^{1-n_{d+2-j}})$ 
we see that this coefficient is $0$ when $n_{d+2-j}=0$ or $1$.
\end{Remark}

\begin{proof}[Proof of \thref{th5.5}]
Equation \eqref{5.15a} follows immediately from \prref{pr4.5} and the fact 
that $\Ph_d(n;x;\al)$ and $P_d(n;x;\al)$ differ by a factor independent 
of $z$. It remains to prove \eqref{5.15b}.
Let us fix $n,\al$. By \leref{le5.1} for $z\in(\C^*)^d$  
we can find $(\nt,\zt,\alt)$ such that equations \eqref{5.1} hold. If 
$\nt\in\N_0^d$ then we can use \thref{th5.3} and replace $\Ph_d(n;x;\al)$
by $\Ph_d(\nt;\xt;\alt)$. Equation \eqref{5.15b} follows from the fact that 
the operator $\fLn_j$ in the variables $n$ with parameters $\al_j$ coincides 
with the operator $\fLz_j$ in the variables $\zt$ with parameters $\alt_j$.
We can use analytic continuation and complete the proof as follows. 
Fix $n,\al$ and write $z$ in terms of the dual 
variables $\nt$, i.e. we put
$$z_k=\frac{\al_{d+1}\al_{d+2}}{\al_k}
q^{\nt_{1}+\nt_{2}+\cdots+\nt_{d+1-k}}\text{ for }k=1,2,\dots,d.$$
Both sides of equation \eqref{5.15b} become Laurent polynomials in the 
variables $q^{\nt_1}$, $q^{\nt_2}$, $\dots$, $q^{\nt_d}$. Since \eqref{5.15b} 
is true for every $\nt\in\N_0^d$, we conclude that it must be true for 
arbitrary $\nt\in\C^d$, or equivalently, for arbitrary $z\in(\C^*)^d$.
\end{proof}

\section*{Acknowledgments}
I would like to thank Jeff Geronimo for stimulating discussions and valuable 
comments.

\end{document}